\theoremstyle{plain} \newtheorem{thm}{\bf Theorem}[section]
\newtheorem{corollary}[thm]{\bf Corollary} \newtheorem{lem}[thm]{\bf Lemma}
 \newtheorem{defn}[thm]{\bf Definition} \theoremstyle{remark}
\newtheorem{rem}{Remark}
\newcommand{\Rg}       {{\hbox{I\kern-.22em\hbox{R}}}}
\newcommand{\Pg}       {{\hbox{I\kern-.22em\hbox{P}}}}
\newcommand{\Eg}       {{\hbox{I\kern-.22em\hbox{E}}}}
\definecolor{lw}{RGB}{0,0,255}
\journal{journal}
\begin{document}

\begin{frontmatter}



\title{Maximum Likelihood Estimation for Mixed Fractional Vasicek Processes}


\author{*Chunhao Cai\\
*Corresponding author\\
School of Mathematics, Shanghai University of Finance and Economics, Shanghai, China.\\
caichunhao@mail.shufe.edu.cn\\
Yinzhong Huang\\
School of Mathematics, Shanghai University of Finance and Economics, Shanghai, China.\\
1307111968@qq.com\\
Weilin Xiao\\
School of Management, Zhejiang University, Zhejiang, China.\\
wlxiao@zju.edu.cn}

\begin{abstract}
The mixed fractional Vasicek model, which is an extended model of the traditional Vasicek model, has been widely used in modelling volatility, interest rate and exchange rate. Obviously, if some phenomenon are modeled by the mixed fractional Vasicek model, statistical inference for
this process is of great interest. Based on continuous time observations, this paper considers the problem of estimating the drift parameters in the mixed fractional Vasicek model. We will propose the maximum likelihood estimators of the drift parameters in the mixed fractional Vasicek model with the Radon-Nikodym derivative for a mixed fractional Brownian motion. Using the fundamental martingale and the Laplace transform, both the strong consistency and the asymptotic normality of the maximum likelihood estimators have been established for all $H\in(0,1)$, $H\neq 1/2$.

\end{abstract}

\begin{keyword}

Maximum likelihood estimator\sep Mixed fractional Vasicek model\sep Asymptotic theory\sep Laplace Transform

\textit{2010 AMS Mathematics subject classification}: Primary 60G22, Secondary 62F10

\end{keyword}

\end{frontmatter}

\section{Introduction}\label{introdution}

The standard Vasicek models, including the diffusion
models based on the Brownian motion and the jump-diffusion models driven by L\'{e}vy processes, provide good service in cases where the data demonstrate the Markovian property and the lack of memory. However, over the past few decades, numerous empirical studies have found that the phenomenon of long-range dependence may observe in data of hydrology, geophysics, climatology and telecommunication, economics and finance. Consequently,
several time series models or stochastic processes have been proposed to capture long-range dependence, both in discrete time and in continuous time. In the continuous time case, the best known and widely used stochastic process that
exhibits long-range dependence or short-range dependence is of course the fractional Brownian motion (fBm), which describes the degree of dependence by the Hurst parameter. This naturally explains the appearance of fBm in the modeling of some properties of \textquotedblleft Real-world\textquotedblright\ data. As well as in the diffusion model with a fBm, the mean-reverting property is very attractive to understand volatility modeling in finance. Hence, the fractional Vasicek model (fVm) becomes the usual candidate to capture some phenomena of the volatility of financial assets (see, for example, \cite{comte1998,aitsahlia2008,gatheral2018}). More precisely,
fVm can be described by the following Langevin equation
\begin{equation}\label{eq: fvasicekmodel}
dX_{t}=\left(\alpha-\beta X_{t}\right)dt+\gamma dB_{t}^{H},\,\,\, t\in [0,T],
\end{equation}
where $\beta,\gamma \in \mathbb{R}^{+}$, $\alpha \in \mathbb{R}$, the initial condition is set at $X_{0}=0$, and $B_{t}^{H}$, a fBm
with Hurst parameter $H\in (0,1)$, is a zero mean Gaussian process with the covariance
\begin{equation}
\mathbb{E}\left( B_{t}^{H}B_{s}^{H}\right) =R_{H}(s,t)=\frac{1}{2}\left(
|t|^{2H}+|s|^{2H}-|t-s|^{2H}\right) \,.  \label{eq: cov fBm}
\end{equation}%

The process $B_{t}^{H}$ is self-similar in the sense that $\forall a\in
R^{+} $, $B_{at}^{H}\overset{d}{=}a^{H}B_{t}^{H}$. It becomes the standard Brownian motion $W_{t}$ when $H=1/2$ and can be represented as a stochastic integral with respect to standard Brownian motion. When $1/2<H<1$, it has long-range dependence in
the sense that $\sum_{n=1}^{\infty}\mathbb{E}%
\left(B_{1}^{H}(B_{n+1}^{H}-B_{n}^{H})\right)=\infty $. In this case, the positive (negative) increments are likely to be followed by positive (negative) increments. The parameter $H$ which is also called the self similarity parameter, measures the intensity of the long range dependence. Recently, borrowing the idea of \cite{Cheridito}, these papers \cite{li2017efficient,jacod2018limit} used the mixed fractional Vasicek model (mfVm) to describe the  some phenomena of the volatility of financial assets, which can be expressed as
\begin{equation}\label{eq: mixed vasicek}
dX_t=(\alpha-\beta X_t)dt+\gamma d\xi_t,\, t\in [0,T],\, X_0=0.
\end{equation}
where $\beta,\gamma \in \mathbb{R}^{+}$, $\alpha \in \mathbb{R}$, the initial condition is set at $X_{0}=0$, $\xi_t=W_t+B_t^H,\, H\in (0,1)$ is a mixed fractional Brownian motion defined in the paper of \cite{CCK}.

When the long term mean $\alpha$ in \eqref{eq: mixed vasicek} is known (without loss of generality, it is assumed to be zero), \eqref{eq: mixed vasicek} becomes the mixed fractional Ornstein-Uhlenbeck process (mfOUp). Using the canonical representation and spectral structure of mfBm, the authors of \cite{CK18} originally proposed the maximum likelihood estimator (MLE) of $\beta$ in \eqref{eq: mixed vasicek} and considered the asymptotical theory for this estimator. With the Laplace Transform and the limit presence of the eigenvalues of the covariance operator for the mfBm, the authors of \cite{CK18b} obtained accurate asymptotic approximations for the eigenvalues and the eigenfunctions for mfVm. Using an asymptotic  approximation for the eigenvalues of its covariance operator, the paper of \cite{CKM20} explained mfVm from the view of spectral theory. Some surveys and complete literatures related to the parametric and other inference procedures for stochastic models driven by mfBm was summarized in a recent monograph of \cite{mishura2018book}. However, in some situation the long term mean $\alpha$ in \eqref{eq: mixed vasicek} is always unknown. Thus, it is important to estimate all the drift parameters, $\alpha$ and $\beta$ in mfVm.   To the best of our knowledge, the asymptotic theory of MLE of $\alpha$ and $\beta$ have not developed yet. This paper will fill in the gaps in this area. Using the Girsanov formula for mfBm, we introduce the MLE for both $\alpha$ and $\beta$. When a continuous record of observations of $X_t$ is available, both the strong consistency and the asymptotic laws of MLE are established in the stationary case for the Hurst parameter $H\in(0,1)$.

The rest of the paper is organized as follows. Section \ref{preliminary} introduces some preliminaries of mfBm. Section \ref{estimators} proposes the MLE for the drift parameters in the mfVm and studies the asymptotic properties of MLE for the Hurst parameter range $H\in(0,1)$ in the stationary case. Section \ref{proof main results} provides the proofs of the main results of this paper. Some technical lemmas are gathered in the Appendix. We use the following notations throughout the paper: $\overset{a.s.}{\rightarrow }$, $\overset{\mathbf{P}}{\rightarrow }$, $\overset{d}{\rightarrow }$ and $\sim $ denote convergence almost surely, convergence in probability, convergence in distribution, and asymptotic equivalence, respectively, as $T\rightarrow \infty $.

\section{Preliminaries}\label{preliminary}

This section is dedicated to some notions that are used in our paper, related mainly to the integro-differential equation and the Radon-Nikodym derivative of mfBm. In fact, mixtures of stochastic processes can have properties, quite different from the individual components. The mfBm drew considerable attention since some of its properties have been discovered in \cite{Cheridito,CCK,mishura2018book}. Moreover, the mfBm has been proved useful in mathematical finance (see, for example, \cite{Cheridito2003}). We start by recalling the definition of the main process of our work, which is the mfBm. For more details about this process and its properties, the interested reader can refer to \cite{Cheridito,CCK,mishura2018book}.

\begin{defn}\label{definition mfBm}
A mfBm of the Hurst parameter $H\in(0,1)$ is a process $\xi=(\xi_t,\, t\in [0,T])$ defined on a probability space $(\Omega, \mathcal{F}, \mathbf{P})$ by
$$
\xi_t=W_t+B_t^H
$$
where $W=(W_t,\, t\in [0,T])$ is the standard Brownian motion and $B^H=(B_t^H,\, t\in [0,T])$ is the independent fBm with the Hurst exponent $H\in (0,1)$ and the covariance function
$$
K(s,t)=\mathbf{E}B_t^HB_s^H=\frac{1}{2}\left(t^{2H}+s^{2H}-|t-s|^{2H-1}\right).
$$
\end{defn}

Let us observe that the increments of mfBm are stationary and
$\xi_t$ is a centered Gaussian process with the covariance function
$$\mathbf{E}\left[\xi_{t}^{H} \xi_{s}^{H}\right]=\min \{t, s\}+\frac{1}{2}\left[t^{2 H}+s^{2 H}-|t-s|^{2 H}\right], \quad s, t \geqslant 0\,.$$

In particular, for $H>1/2$, increments of mfBm exhibit long-range dependence, which makes it important in modeling volatile in finance. Let $\mathcal{F}^{\xi}=(\mathcal{F}_t^{\xi},\, t\in [0,T])$. We will use the canonical representation suggested in \cite{CCK}, based on the martingale
$$
M_t=\mathbf{E}(B_t|\mathcal{F}_t^{\xi}),\,\,\, t\in [0,T].
$$

It is clear that the process $M$ is a $\mathcal{F}^{\xi}$-martingale and admits the following representation
\begin{equation*}
M_t=\int_0^t g(s,t)d\xi_s ,\,\, \langle M\rangle_t=\int_0^t g(s,t)ds,\, t\geq 0,\,\,\, t\in [0,T],
\end{equation*}
where the stochastic integral is defined for $L^2(0, T)$ deterministic integrands in the usual way and the kernel $g(s,t)$ solves the integro-differential equation
$$
g(s,t)+H\frac{d}{ds}\int_0^t g(r,t)|r-s|^{2H-1}\textrm{sign}(s-r)dr=1,\,\,\, 0<s\neq t\leq T.
$$

For $H>1/2$, the equation $g(s,t)$ is a Wiener-Hopfner equation:
$$
g(s,t)+H(2H-1)\int_0^t g(r,t)|r-s|^{2H-2}dr=1,\,\, 0\leq s\leq t\leq T.
$$

Moreover, from \cite{CCK} we have the following result.

\begin{lem}
For $H>1/2$, the quadratic variation of the $\mathcal{F}^{\xi}$-martingale $M$ is
\begin{equation}\label{eq: innovation}
\langle M\rangle_t=\int_0^t g^2(s,s)ds
\end{equation}
and moreover,
\begin{equation}\label{eq: G}
\xi_t=\int_0^t G(s,t)dM_s,\,\, t\in [0,T]
\end{equation}
where G(s,t) is defined in the equation (2.20) of \cite{CCK}.
\end{lem}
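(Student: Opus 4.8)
The plan is to exploit that $M$ is a centered Gaussian martingale with $M_0=0$, so that its bracket coincides with its variance, $\langle M\rangle_t=\mathbf{E}[M_t^2]$, and then to compute this variance from the Volterra representation $M_t=\int_0^t g(s,t)\,d\xi_s$. Substituting this representation and using the covariance of $\xi$, whose mixed differential $d_s d_u\,\mathbf{E}[\xi_s\xi_u]$ splits into the Brownian part $\delta(s-u)\,ds\,du$ and the fractional part $H(2H-1)|s-u|^{2H-2}\,ds\,du$, I would write
\begin{equation*}
\mathbf{E}[M_t^2]=\int_0^t g^2(s,t)\,ds+H(2H-1)\int_0^t\!\!\int_0^t g(s,t)g(u,t)|s-u|^{2H-2}\,ds\,du .
\end{equation*}
The inner $u$-integral in the double term is exactly the nonconstant part of the Wiener--Hopf equation for $g$, so it equals $1-g(s,t)$; the double term collapses to $\int_0^t g(s,t)\bigl(1-g(s,t)\bigr)\,ds$, and after cancellation one obtains the compact form $\langle M\rangle_t=\int_0^t g(s,t)\,ds$.

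It then remains to pass from $\int_0^t g(s,t)\,ds$ to the diagonal expression $\int_0^t g^2(s,s)\,ds$, which is the crux of the argument. I would prove the stronger pointwise statement $\frac{d}{dt}\int_0^t g(s,t)\,ds=g^2(t,t)$. Differentiating the Wiener--Hopf equation in the upper limit $t$ gives, with $c_H=H(2H-1)$ and $h(s,t):=\partial_t g(s,t)$,
\begin{equation*}
h(s,t)+c_H\int_0^t h(r,t)|s-r|^{2H-2}\,dr=-c_H\,g(t,t)\,|s-t|^{2H-2}.
\end{equation*}
Writing $\mathcal{K}_t$ for the self-adjoint integral operator with kernel $|s-r|^{2H-2}$ on $L^2(0,t)$, both $g$ and $h$ are images of the resolvent $(I+c_H\mathcal{K}_t)^{-1}$, so pairing with the constant $1$ and using self-adjointness yields $\int_0^t h(s,t)\,ds=-c_H\,g(t,t)\,\langle g,\,|\cdot-t|^{2H-2}\rangle$. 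The remaining pairing is evaluated from the Wiener--Hopf equation taken on the diagonal $s=t$, which gives $c_H\langle g,\,|\cdot-t|^{2H-2}\rangle=1-g(t,t)$. Combining, $\int_0^t \partial_t g(s,t)\,ds=g^2(t,t)-g(t,t)$, and since the boundary term of $\frac{d}{dt}\int_0^t g(s,t)\,ds$ contributes $g(t,t)$, the two $g(t,t)$ terms cancel and leave $g^2(t,t)$. Integrating in $t$ delivers \eqref{eq: innovation}.

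For the second assertion \eqref{eq: G}, the key structural fact is that $M$ is the innovation martingale, so that $\mathcal{F}^M=\mathcal{F}^\xi$; this holds because $g(t,t)\neq0$ makes the Volterra map $\xi\mapsto M$ boundedly invertible. Consequently $\xi$ admits a representation $\xi_t=\int_0^t G(s,t)\,dM_s$, and the kernel $G$ is obtained by inverting the relation $M_t=\int_0^t g(s,t)\,d\xi_s$: $G$ is the resolvent kernel associated with $g$, characterized by the requirement that the composition of the two kernels reduces to the identity on $[0,t]$. I would then verify that this $G$ coincides with the explicit kernel in equation (2.20) of \cite{CCK}, either by substituting the martingale differential back and checking directly that the kernels compose correctly, or by reading $G$ off the filtering formula for the optimal estimate $M_t=\mathbf{E}(B_t^H\mid\mathcal{F}_t^\xi)$.

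The main obstacle is the diagonalization step: converting $\int_0^t g(s,t)\,ds$ into $\int_0^t g^2(s,s)\,ds$ requires the differentiated Wiener--Hopf identity together with careful control of $\partial_t g(s,t)$ near the diagonal $s=t$, where the kernel $|s-t|^{2H-2}$ is singular for $H>1/2$. Establishing $\mathcal{F}^M=\mathcal{F}^\xi$ rigorously, rather than formally, is the secondary difficulty, since it underlies the very existence of the inverse representation \eqref{eq: G}.
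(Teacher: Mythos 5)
Your proposal addresses a statement that this paper never actually proves: the lemma is imported directly from \cite{CCK} (the text introduces it with ``from \cite{CCK} we have the following result''), so there is no in-paper proof to compare against, and what you have written is in effect a reconstruction of the argument in the cited source. The reconstruction is correct and follows the same route as \cite{CCK}: (i) $M$ is a Gaussian martingale, so $\langle M\rangle_t=\mathbf{E}[M_t^2]$, and the isometry for $\xi=W+B^H$ combined with the Wiener--Hopf equation collapses the variance to $\int_0^t g(s,t)\,ds$; (ii) the crux is the diagonal identity $\frac{d}{dt}\int_0^t g(s,t)\,ds=g^2(t,t)$, which you obtain by differentiating the Wiener--Hopf equation in $t$, pairing against the constant function, and using self-adjointness of the resolvent together with the equation evaluated at $s=t$; your bookkeeping, $\int_0^t\partial_t g(s,t)\,ds=-g(t,t)\bigl(1-g(t,t)\bigr)$, which cancels the Leibniz boundary term $g(t,t)$ and leaves $g^2(t,t)$, is exactly right; (iii) the representation $\xi_t=\int_0^t G(s,t)\,dM_s$ comes from inverting the Volterra relation inside the Gaussian space of $M$. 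Since this is the same strategy as the original source, the only genuine difference from ``the paper's approach'' is that the paper's approach is citation.

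Two steps of your sketch are looser than the rest and would need repair in a complete write-up. First, in (iii) you deduce $\mathcal{F}^M=\mathcal{F}^\xi$ from ``bounded invertibility of the Volterra map $\xi\mapsto M$ because $g(t,t)\neq 0$''; this is not a general fact about Volterra maps on path space, and as stated the logic is circular, because the existence of the inverse kernel $G$ is what the filtration equality is supposed to deliver. The clean order of argument (and the one in \cite{CCK}) is the reverse: construct $G$ explicitly (their equation (2.20)), verify $\xi_t=\int_0^t G(s,t)\,dM_s$ directly --- equivalently, note that $\xi_t$ lies in the closed first-chaos span of $\{M_s,\ s\le t\}$ --- and only then conclude $\mathcal{F}^\xi_t=\mathcal{F}^M_t$ from the two reciprocal representations. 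Second, step (ii) requires justifying the differentiation of the Wiener--Hopf equation and the interchange in $\frac{d}{dt}\int_0^t g(s,t)\,ds=g(t,t)+\int_0^t\partial_t g(s,t)\,ds$, i.e.\ enough regularity of $g$ and integrability of the singularity $\partial_t g(s,t)\sim -c_H\,g(t,t)\,|s-t|^{2H-2}$ near the diagonal (harmless for $H>1/2$ since $2H-2>-1$); you flag this as ``the main obstacle'' but do not carry it out, and these estimates are precisely the technical content that \cite{CCK} supply and that this paper chose to cite rather than reprove.
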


The equality \eqref{eq: innovation} suggests that the martingale $M$ admits innovation type representation, which can be used to analyse the structure of the mfBm with stochastic drift. Let us mention that the innovation type representation of the martingale $M$ can be also used to derive an analogue of Girsanov's theorem, which will be the key tool for constructing  MLE.
\begin{corollary}
Consider a process $Y=(Y_t,\, t\in [0,T])$ defined by
$$
Y_t=\int_0^tf(s)ds+\xi_t,\,\,\, t\in [0,T]\,,
$$
where $f=(f(t),\, t\in [0,T])$ is a process with continuous path and $\mathbf{E}\int_0^T |f(t)|dt<\infty$, adapted to a filtration $\mathcal{G}=(\mathcal{G}_t)$ with respect to a martingale $M$. Then $Y$ admits the following representation
$$
Y_t=\int_0^t G(s,t)dZ_s
$$
with $G(s,t)$ defined in \eqref{eq: G} and the process $Z=(Z_t,\, t\in [0,T])$ can be written as
$$
Z_t=\int_0^t g(s,t)dY_s,\,\,\, t\in [0,T]\,.
$$

Let us mention that $Z_t$ is a $\mathcal{G}$-martingale with Doob-Meyer decomposition
$$
Z_t=M_t+\int_0^t \Phi(s)d\langle M\rangle_s\,,
$$
where
$$
\Phi(t)=\frac{d}{d\langle M\rangle_t}\int_0^t g(s,t)f(s)ds.
$$

In particular, $\mathcal{F}_t^Y=\mathcal{F}_t^{Z},\, \mathbf{P}-a.s. $ for all $t\in [0,T]$. Moreover, if
$$
\mathbf{E}\exp \left\{ -\int_0^T\Phi(t)dM_t-\frac{1}{2}\int_0^T \Phi^2(t)d\langle M\rangle_t \right\}=1,
$$
then the measures $\mu^{\xi}$ and $\mu^Y$ are equivalent and the corresponding Radon-Nikodym derivative is given by
$$
\frac{d\mu^Y}{d\mu^{\xi}}(Y)=\exp \left\{\int_0^T \hat{\Phi}(t)dZ_t-\frac{1}{2}\int_0^T \hat{\Phi}^2(t)d\langle M\rangle_t          \right\},
$$
where $\hat{\Phi}(t)=\mathbf{E}(\Phi(t)|\mathcal{F}_t^Y)$.
\end{corollary}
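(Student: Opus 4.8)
The plan is to exploit the linearity of the kernel transformation $Y\mapsto Z$ defined by $Z_t=\int_0^t g(s,t)\,dY_s$, which is exactly the map that sends the mfBm $\xi$ to its fundamental martingale $M$. First I would substitute $dY_s=f(s)\,ds+d\xi_s$ to obtain
$$
Z_t=\int_0^t g(s,t)f(s)\,ds+\int_0^t g(s,t)\,d\xi_s=\int_0^t g(s,t)f(s)\,ds+M_t,
$$
the second integral being $M_t$ by the canonical representation recalled in the preceding Lemma. Since $d\langle M\rangle_t=g^2(t,t)\,dt$ is absolutely continuous and strictly positive, the bounded-variation term $t\mapsto\int_0^t g(s,t)f(s)\,ds$ may be rewritten as $\int_0^t\Phi(s)\,d\langle M\rangle_s$ with $\Phi$ equal to the stated Radon--Nikodym density; this simultaneously shows that $\Phi$ is well defined and yields the Doob--Meyer decomposition $Z_t=M_t+\int_0^t\Phi(s)\,d\langle M\rangle_s$, identifying $M$ as the $\mathcal{G}$-martingale part of the $\mathcal{G}$-semimartingale $Z$.

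Next I would invert the transformation. Because the integral kernels $g(\cdot,t)$ and $G(\cdot,t)$ are mutually inverse — this is precisely what underlies the identity $\xi_t=\int_0^t G(s,t)\,dM_s$ produced from $M_t=\int_0^t g(s,t)\,d\xi_s$ in the preceding Lemma — the composition of the two transforms acts as the identity, so that $Y_t=\int_0^t G(s,t)\,dZ_s$. Subtracting the noise identity $\xi_t=\int_0^t G(s,t)\,dM_s$ and using $dZ_s=dM_s+\Phi(s)\,d\langle M\rangle_s$ then forces $\int_0^t G(s,t)\Phi(s)\,d\langle M\rangle_s=\int_0^t f(s)\,ds$, confirming the consistency of the representation. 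The equality $\mathcal{F}_t^Y=\mathcal{F}_t^Z$ follows because each of the maps $Y\mapsto Z$ and $Z\mapsto Y$ is causal: $Z_t$ is a measurable functional of $\{Y_s:s\le t\}$ and conversely, so the two generated filtrations coincide $\mathbf{P}$-a.s.

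Finally I would derive the Radon--Nikodym derivative by a change-of-measure argument on the observation filtration $\mathcal{F}^Y=\mathcal{F}^Z$. Since the drift $\Phi$ is a priori only $\mathcal{G}$-adapted, I would first project it onto the observations via the innovation theorem: the process $\bar Z_t=Z_t-\int_0^t\hat\Phi(s)\,d\langle M\rangle_s$, with $\hat\Phi(t)=\mathbf{E}(\Phi(t)\mid\mathcal{F}_t^Y)$, is an $\mathcal{F}^Y$-martingale carrying the same quadratic variation $\langle M\rangle$. Under the stated hypothesis that the candidate exponential has expectation one (a Novikov/Girsanov-type condition), Girsanov's theorem for the continuous martingale $Z$ then gives the equivalence of $\mu^{\xi}$ and $\mu^Y$ together with the asserted likelihood ratio, whose exponent is the stochastic logarithm of the density martingale driven by the innovation $\bar Z$.

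The main obstacle is the filtering step in the last paragraph: one must rigorously justify replacing $\Phi$ by its optional projection $\hat\Phi$ and verify that $\bar Z$ is genuinely an $\mathcal{F}^Y$-martingale with the unchanged bracket $\langle M\rangle$, so that the exponential density is well defined and integrable. The absolute continuity and strict positivity of $d\langle M\rangle_t=g^2(t,t)\,dt$, together with the integrability hypothesis $\mathbf{E}\int_0^T|f(t)|\,dt<\infty$, are exactly what make this projection and the ensuing application of Girsanov's theorem legitimate.
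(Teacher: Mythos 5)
Your proposal is correct in outline, but there is nothing in the paper to compare it against: the corollary is stated in the preliminaries without proof, imported from the cited reference \cite{CCK}, and the argument you reconstruct is precisely the one given there (in the style of Liptser--Shiryaev). Substituting $dY_s=f(s)\,ds+d\xi_s$ into $Z_t=\int_0^t g(s,t)\,dY_s$ to split off $M_t$, rewriting the drift as $\int_0^t\Phi(s)\,d\langle M\rangle_s$, inverting with the kernel $G$, deducing $\mathcal{F}_t^Y=\mathcal{F}_t^Z$ from two-sided causality, and obtaining the likelihood ratio by projection plus Girsanov is exactly the route of that source. Two points in your sketch need more care than you give them. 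First, the well-definedness of $\Phi$ does not follow merely from $d\langle M\rangle_t$ being absolutely continuous with positive density: because $g(s,t)$ depends on $t$, the drift $t\mapsto\int_0^t g(s,t)f(s)\,ds$ is differentiable with respect to $\langle M\rangle_t$ only after one establishes regularity of $\dot g(s,t)=\partial_t g(s,t)$ and justifies differentiation under the integral sign (this is supplied in \cite{CCK}); moreover, the identity $d\langle M\rangle_t=g^2(t,t)\,dt$ that you invoke is stated in the paper only for $H>1/2$. Second, the assertion that $\bar Z_t=Z_t-\int_0^t\hat\Phi(s)\,d\langle M\rangle_s$ is an $\mathcal{F}^Y$-martingale with the unchanged bracket $\langle M\rangle$ is the Fujisaki--Kallianpur--Kunita innovation theorem; you correctly flag it as the crux, and invoking it is legitimate, but as written your final paragraph appeals to that theorem rather than proving it, so the Girsanov step is complete only modulo this standard (nontrivial) filtering result.
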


\section{Estimators and asymptotic behaviors}\label{estimators}
Let us define
\begin{equation}\label{eq: change model}
Z_t=\int_0^t g(s,t)dX_s,\, \,\, Q_t=\frac{d}{d\langle M\rangle_t}\int_0^t g(s,t)X_sds,\,\, t\in [0,T]\,.
\end{equation}

Then using the quadratic variation of $Z$ on [0,T], we can estimate $\gamma$ almost surely from any small interval as long as we have a continuous observation of the process. Moreover, the estimation of $H$ in the mfBm have been done in \cite{MSD15}. As a consequence, for further statistical analysis, we assume that $H$ and $\gamma$ are known and, without loss of generality, from now on we suppose that $\gamma$ is equal to one.
For $\gamma=1$, our observation will be $Z=(Z_t,\, t\in [0,T])$, where $Z_t$ satisfies the following equation:
\begin{equation}\label{eq: Zt}
dZ_t=(\alpha-\beta Q_t)d\langle M\rangle_t+dM_t,\, t\in [0,T].
\end{equation}

Applying the analog of the Girsanov formula for a mfBm, we can obtain the following likelihood ratio and the explicit expression of the likelihood function:
\begin{equation}\label{eq: likelihood function}
\mathcal{L}_T(\alpha, \beta, Z^T)=\exp\left(\int_0^T (\alpha-\beta Q_t)  dZ_t-\frac{1}{2}\int_0^T  (\alpha-\beta Q_t)^2d\langle M\rangle _t \right)\,.
\end{equation}

\subsection{Only one parameter is unknown}
Denote the log-likelihood equation by $\Lambda(Z^T)=\log \mathcal{L}_T(\alpha,\beta,Z^T)$. First of all, if we suppose $\alpha$ is known and $\beta>0$ is the unknown parameter, then the MLE $\tilde{\beta}_T$ is defined by
\begin{equation}\label{eq: tilde beta}
\tilde{\beta}_T=\frac{\int_0^T \alpha Q_t d\langle M\rangle_t-\int_0^T Q_tdZ_t}{\int_0^T Q_t^2d\langle M\rangle_t}
\end{equation}
then using \eqref{eq: Zt} for all $H\in (0,1), H\neq 1/2$, the estimator error can be presented by
\begin{equation}\label{eq: tilde beta error}
\tilde{\beta}_T-\beta=\frac{\int_0^T Q_t dM_t}{\int_0^T Q_t^2 d\langle M\rangle_t}.
\end{equation}
We have the following results:
\begin{thm}\label{th: asymptotical tilde beta}
For $H>1/2$, 
$$
\sqrt{T}(\tilde{\beta}_T-\beta)\xrightarrow{d}\mathcal{N}(0,2\beta)
$$
and for $H<1/2$,
$$
\sqrt{T} (\tilde{\beta}_T-\beta)\xrightarrow{d} \mathcal{N}\left(0, \frac{2\beta^2}{2\alpha^2+\beta}\right)
$$
\end{thm}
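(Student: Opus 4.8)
The plan is to exploit the fact that, by \eqref{eq: tilde beta error}, the normalized error is the ratio of a continuous martingale to its own quadratic variation. Writing
$$
N_T=\int_0^T Q_t\,dM_t,\qquad \langle N\rangle_T=\int_0^T Q_t^2\,d\langle M\rangle_t,
$$
and noting that $Q$ is adapted to the observation filtration while, by \eqref{eq: Zt}, $M$ is a martingale with respect to it, we see that $N$ is a continuous martingale with bracket $\langle N\rangle_T$, and
$$
\sqrt{T}(\tilde\beta_T-\beta)=\frac{T^{-1/2}N_T}{T^{-1}\langle N\rangle_T}.
$$
The whole statement then reduces to two ingredients: a central limit theorem for the numerator and a law of large numbers identifying the deterministic limit $\eta^2$ of the denominator $T^{-1}\langle N\rangle_T$.

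For the numerator I would invoke the classical martingale central limit theorem (equivalently the Dambis--Dubins--Schwarz time change $N_T=\widetilde W_{\langle N\rangle_T}$): once $T^{-1}\langle N\rangle_T\to\eta^2$ in probability with $\eta^2>0$, it follows that $T^{-1/2}N_T\xrightarrow{d}\mathcal N(0,\eta^2)$. This step is routine and insensitive to the value of $H$, so it carries no difficulty specific to the mixed model; the positivity $\eta^2>0$ (which also guarantees $\langle N\rangle_T\to\infty$) comes for free from the computation below.

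The crux, and the step I expect to be the genuine obstacle, is showing that
$$
\frac{1}{T}\int_0^T Q_t^2\,d\langle M\rangle_t\xrightarrow{\mathbf P}\eta^2
$$
and evaluating $\eta^2$ explicitly in each regime. Here I would use the stationarity assumption to regard $Q_t$ as asymptotically stationary, compute $\mathbf E\int_0^T Q_t^2\,d\langle M\rangle_t$ through its Laplace transform in $T$, and read off the leading linear-in-$T$ coefficient by a Tauberian argument; a variance (or ergodic) estimate then promotes convergence of the mean to convergence in probability. The two regimes must be treated separately because the kernel $g(s,t)$ and the bracket $\langle M\rangle$ of the fundamental martingale have structurally different behavior for $H>1/2$ (the Wiener--Hopf regime) and for $H<1/2$. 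I anticipate $\eta^2=1/(2\beta)$ in the first case and $\eta^2=(2\alpha^2+\beta)/(2\beta^2)$ in the second; note that the latter equals $(\alpha/\beta)^2+1/(2\beta)$, so heuristically the nonzero stationary mean $\alpha/\beta$ contributes the extra term $(\alpha/\beta)^2$ only when $H<1/2$, which is exactly what renders the limiting variance $\alpha$-dependent there.

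Combining the two ingredients by Slutsky's theorem then yields
$$
\sqrt{T}(\tilde\beta_T-\beta)\xrightarrow{d}\mathcal N\!\left(0,\tfrac1{\eta^2}\right),
$$
namely $\mathcal N(0,2\beta)$ for $H>1/2$ and $\mathcal N\!\big(0,2\beta^2/(2\alpha^2+\beta)\big)$ for $H<1/2$, as claimed. Beyond the Laplace-transform computation, the only points requiring care are justifying that the asymptotic-stationarity replacement of $Q_t$ costs only an $o(T)$ error in the bracket, and confirming strict positivity of $\eta^2$ so that the denominator limit is nondegenerate and Slutsky applies.
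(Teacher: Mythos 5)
Your proposal is correct and follows essentially the same route as the paper's proof: write $\sqrt{T}(\tilde{\beta}_T-\beta)$ as the ratio of the normalized martingale $T^{-1/2}\int_0^T Q_t\,dM_t$ to its normalized bracket $T^{-1}\int_0^T Q_t^2\,d\langle M\rangle_t$, identify the bracket limit in probability ($1/(2\beta)$ for $H>1/2$, and $(\alpha/\beta)^2+1/(2\beta)=(2\alpha^2+\beta)/(2\beta^2)$ for $H<1/2$), and conclude with the martingale central limit theorem of \cite{HH80} together with Slutsky's argument. The only refinement worth noting is that the regime difference is driven not by asymptotic stationarity of $Q_t$ but by the growth of the bracket itself: for $H>1/2$ one has $\frac{d}{dT}\langle M\rangle_T\sim T^{1-2H}$ (Lemma \ref{quadratic variation big}), so $\langle M\rangle_T=o(T)$ and the constant component $\alpha/\beta$ in the decomposition $Q_t=\frac{\alpha}{\beta}-\frac{\alpha}{\beta}V(t)+Q_t^U$ of Lemma \ref{lem Qt} is annihilated by the normalization $1/T$, whereas for $H<1/2$ one has $\langle M\rangle_T\sim T$ (Lemma \ref{quadratic smaller}), so that same constant contributes the extra term $(\alpha/\beta)^2$.
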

Now we suppose $\beta$ is known and $\alpha$ is the parameter to be estimated. Then the MLE $\tilde{\alpha}_T$ is 
\begin{equation}\label{eq: tilde alpha}
\tilde{\alpha}_T=\frac{Z_T+\beta\int_0^T Q_t d\langle M\rangle_t}{\langle M\rangle_T}
\end{equation}
Still with \eqref{eq: Zt}, the estimator error will be 
\begin{equation}\label{asymptotical tilde alpha}
\tilde{\alpha}-\alpha=\frac{M_T}{\langle M\rangle_T}.
\end{equation}
The asymptotical property is the same as well as the linear case which has been demonstrated in \cite{CK18}. That is for $H>1/2$, $T^{1-H}(\tilde{\alpha}_T-\alpha)\xrightarrow{d} \mathcal{N}(0,v_H)$
where $v_H$ is a constant defined in Theorem \ref{th: convergence normal} and for $H<1/2$, $\sqrt{T}(\tilde{\alpha}-\alpha)\xrightarrow{d}\mathcal{N}(0,1)$.

\subsection{Two parameters unknown}

Then taking the derivatives of the log-likelihood function, $\Lambda(Z^T)$, with respect to $\alpha$ and $\beta$ and setting them to zero, we can obtain the following results:
\begin{equation}\label{eq: partial derivative}
\left\{
\begin{array}{l}
\frac{\partial \Lambda(Z^T)}{\partial \alpha}=Z_T-\alpha \langle M\rangle_T+\beta \int_0^T Q_td\langle M\rangle_t=0\\
\frac{\partial \Lambda(Z^T)}{\partial \beta}=-\int_0^T Q_tdZ_t+\alpha \int_0^T Q_td\langle M\rangle_t-\beta\int_0^T Q_t^2d\langle M\rangle_t\\
\end{array}
\right.
\end{equation}

The MLE $\hat{\alpha}_T$ and $\hat{\beta}_T$ is solution of equation of \eqref{eq: partial derivative} and the maximization can be confirmed when we check the second partial derivative of $\Lambda(Z^T)$ by the Cauchy-Schwarz inequality. Now the solution of \eqref{eq: partial derivative} gives us:
$$
\hat{\alpha}_T=\frac{\int_0^T Q_tdZ_t\int_0^TQ_td\langle M\rangle_t-Z_T\int_0^TQ_t^2d\langle M\rangle_t}{\left(\int_0^T Q_td\langle M\rangle_t \right)^2-\langle M\rangle_T\int_0^T Q_t^2d\langle M\rangle_t}
$$
and
$$
\hat{\beta}_T=\frac{\langle M\rangle_T\int_0^TQ_tdZ_t-Z_T\int_0^TQ_td\langle M\rangle_t             }{ \left(\int_0^TQ_td\langle M\rangle_t \right)^2-\langle M\rangle_T\int_0^T Q_t^2d\langle M\rangle_t }.
$$

From the expression of $Z=(Z_t,\, t\in [0,T])$, we obtain that the error term of the MLE can be written as:
\begin{equation}\label{eq: Estimator error alpha} 
\hat{\alpha}_T-\alpha=\frac{\int_0^TQ_tdM_t\int_0^TQ_td\langle M\rangle_t-M_T\int_0^TQ_t^2d\langle M\rangle_t}{\left(\int_0^TQ_td\langle M\rangle_t\right)^2-\langle M\rangle_T\int_0^TQ_t^2d\langle M\rangle_t}
\end{equation}
and
\begin{equation}\label{eq: Estimator error beta}
\hat{\beta}_T-\beta=\frac{\langle M\rangle_T\int_0^TQ_tdM_t-M_T\int_0^TQ_td\langle M\rangle_t}{\left(\int_0^T Q_td\langle M\rangle_t\right)^2-\langle M\rangle_T\int_0^TQ_t^2d\langle M\rangle_t}.
\end{equation}

We can now describe the asymptotic laws of $\hat{\alpha}_T$ and $\hat{\beta}_T$ for $H\in (0,1)$ but $H\neq 1/2$.
\begin{thm}\label{th: convergence normal}
For $H>1/2$ and as $T\rightarrow\infty$, we have
\begin{equation}\label{eq: normality of beta}
\sqrt{T}\left(\hat{\beta}_T-\beta\right)\xrightarrow{d}\mathcal{N}\left(0,2\beta \right)\,,
\end{equation}
and
\begin{equation}\label{eq: normality of alpha}
T^{1-H}\left(\hat{\alpha}_T-\alpha\right)\xrightarrow{d}
\mathcal{N}\left(0,v_H\right)\,,
\end{equation}
where $v_H=\frac{2H\Gamma(H+1/2)\Gamma(3-2H)}{\Gamma(3/2-H)}.$
\end{thm}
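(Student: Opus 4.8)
The plan is to reduce the coupled two-parameter estimation to the two one-parameter problems already summarised in Theorem~\ref{th: asymptotical tilde beta} and the display following it, exploiting the fact that $\hat{\beta}_T$ and $\hat{\alpha}_T$ converge on the genuinely different scales $\sqrt{T}$ and $T^{1-H}$, so that at leading order the two directions decouple. To keep the bookkeeping transparent I abbreviate $A_T=\langle M\rangle_T$, $B_T=\int_0^T Q_t\,d\langle M\rangle_t$, $C_T=\int_0^T Q_t^2\,d\langle M\rangle_t$ and $N_T=\int_0^T Q_t\,dM_t$, so that \eqref{eq: Estimator error alpha} and \eqref{eq: Estimator error beta} become
\begin{equation*}
\hat{\alpha}_T-\alpha=\frac{N_TB_T-M_TC_T}{B_T^2-A_TC_T},\qquad
\hat{\beta}_T-\beta=\frac{A_TN_T-M_TB_T}{B_T^2-A_TC_T}.
\end{equation*}
Here $M_T$ and $N_T$ are continuous martingales with \emph{deterministic} angle brackets $A_T$ and $C_T$, while the Cauchy--Schwarz inequality gives $B_T^2\le A_TC_T$, so the common denominator is (up to sign) a variance and is strictly negative. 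Everything then comes down to pinning the exact growth rate of each block and deciding which products dominate.

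First I would settle the deterministic asymptotics of the bracket. Starting from $A_T=\int_0^T g^2(s,s)\,ds$ in \eqref{eq: innovation}, the sharp behaviour of $g(s,s)$ as $s\to\infty$ is obtained from the Wiener--Hopf equation for $g$ by passing to the Laplace transform and invoking an Abelian/Tauberian argument, which produces $g^2(s,s)\sim c\,s^{1-2H}$ and hence
\begin{equation*}
A_T=\langle M\rangle_T\sim v_H^{-1}\,T^{2-2H},\qquad
v_H=\frac{2H\,\Gamma(H+1/2)\,\Gamma(3-2H)}{\Gamma(3/2-H)}.
\end{equation*}
I expect this to be the main obstacle: the exact constant $v_H$, with its three Gamma factors, requires the fine Laplace-transform analysis of the fundamental martingale carried out in the technical lemmas of the Appendix, and all subsequent variances inherit it.

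Next I would record the ergodic behaviour of the random blocks in the stationary regime. Because the stationary mean of $X$ is $\alpha/\beta$ and $\int_0^t g(s,t)\,ds=\langle M\rangle_t$, the process $Q_t$ has limiting mean $\alpha/\beta$, so the mean part dominates and $B_T=O(T^{2-2H})$, while the Fisher-information block obeys the law of large numbers $C_T/T\to 1/(2\beta)$ (consistent with Theorem~\ref{th: asymptotical tilde beta}), i.e. $C_T=O(T)$. Comparing exponents, with $2-2H<1$ for $H>1/2$, gives $B_T^2=O(T^{4-4H})$ negligible against $A_TC_T=O(T^{3-2H})$, so $B_T^2-A_TC_T\sim -A_TC_T$. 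The same accounting in the numerators shows $A_TN_T=O_p(T^{5/2-2H})$ dominates $M_TB_T=O_p(T^{3-3H})$, whereas $M_TC_T=O_p(T^{2-H})$ dominates $N_TB_T=O_p(T^{5/2-2H})$. Hence
\begin{equation*}
\hat{\beta}_T-\beta\sim-\frac{N_T}{C_T},\qquad
\hat{\alpha}_T-\alpha\sim\frac{M_T}{A_T},
\end{equation*}
which share the limiting laws of the one-parameter error terms \eqref{eq: tilde beta error} and \eqref{asymptotical tilde alpha}; I would then verify that the discarded terms, once divided by the denominator and multiplied by the relevant rate, are $O_p(T^{1/2-H})\to 0$, which makes the decoupling rigorous.

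Finally I would conclude with the central limit theorem. For $\beta$, the martingale $N_T=\int_0^TQ_t\,dM_t$ has bracket $C_T$ with $C_T/T\to 1/(2\beta)$, so the martingale CLT yields $N_T/\sqrt{T}\xrightarrow{d}\mathcal{N}(0,1/(2\beta))$, and dividing by $C_T/T\to 1/(2\beta)$ gives $\sqrt{T}(\hat{\beta}_T-\beta)\xrightarrow{d}\mathcal{N}(0,2\beta)$. For $\alpha$, the martingale $M_T$ is exactly Gaussian with deterministic variance $A_T$, so $M_T/\sqrt{A_T}\sim\mathcal{N}(0,1)$, and inserting $A_T\sim v_H^{-1}T^{2-2H}$ yields $T^{1-H}(\hat{\alpha}_T-\alpha)\xrightarrow{d}\mathcal{N}(0,v_H)$. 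Slutsky's theorem together with the negligibility of the discarded terms established above closes the argument.
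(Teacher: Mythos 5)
Your proposal is correct and takes essentially the same route as the paper's proof: the identical error decomposition, the same dominance bookkeeping showing that $B_T^2$, $M_TB_T$ and $N_TB_T$ are asymptotically negligible (this is precisely what the paper establishes via Lemmas \ref{limit V} and \ref{limit Q U} and equations \eqref{eq: 17}--\eqref{eq: 18}, \eqref{eq: 20}--\eqref{eq: 21}), and the same two limit inputs, namely the martingale CLT with $C_T/T\to 1/(2\beta)$ (Lemma \ref{limit Qt}) and the exact Gaussianity of $M_T$ combined with $\langle M\rangle_T\sim v_H^{-1}T^{2-2H}$, which is equivalent to the statement \eqref{eq: 22} that the paper imports from \cite{CK18}. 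The only slip is your claim that the bracket $C_T$ of $N_T$ is deterministic --- it is random since $Q_t$ is --- but this is harmless because your final step correctly invokes the martingale CLT with convergence of $C_T/T$ in probability rather than exact Gaussianity of $N_T$.
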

\begin{thm}\label{th: convergence normal smaller}
In the case of $H<1/2$, the maximum likelihood estimator of $\hat{\beta}_T$ has the same property of asymptotical normality presented in \eqref{eq: normality of beta} and for $\hat{\alpha}_T$ we have 
\begin{equation}\label{eq: normality of alpha smaller}
\sqrt{T}(\hat{\alpha}_T-\alpha)\xrightarrow{d}\mathcal{N}(0,1+\frac{2\alpha^2}{\beta})
\end{equation}
\end{thm}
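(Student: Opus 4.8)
The plan is to recast the two estimator errors \eqref{eq: Estimator error alpha}--\eqref{eq: Estimator error beta} as a single linear system driven by a bivariate martingale, and then combine a law of large numbers for the (observed) information matrix with a martingale central limit theorem for the score. Abbreviate
$$
C_T=\langle M\rangle_T,\quad A_T=\int_0^T Q_t\,d\langle M\rangle_t,\quad B_T=\int_0^T Q_t^2\,d\langle M\rangle_t,\quad N_T=\int_0^T Q_t\,dM_t.
$$
Integrating \eqref{eq: Zt} gives $Z_T=\alpha C_T-\beta A_T+M_T$ and $\int_0^T Q_t\,dZ_t=\alpha A_T-\beta B_T+N_T$; substituting these into the score equations \eqref{eq: partial derivative} and rearranging, the errors solve
$$
\mathcal{I}_T\begin{pmatrix} \hat{\alpha}_T-\alpha \\ \hat{\beta}_T-\beta \end{pmatrix}=\begin{pmatrix} M_T \\ -N_T \end{pmatrix},\qquad \mathcal{I}_T:=\begin{pmatrix} C_T & -A_T \\ -A_T & B_T \end{pmatrix}.
$$
By the Cauchy--Schwarz inequality $\det\mathcal{I}_T=C_TB_T-A_T^2>0$, so $\mathcal{I}_T$ is invertible and its inverse reproduces exactly the quotients in \eqref{eq: Estimator error alpha}--\eqref{eq: Estimator error beta}. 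Since for $H<1/2$ both coordinates are to be normalized at the single rate $\sqrt{T}$, the whole problem reduces to the asymptotics of $T^{-1}\mathcal{I}_T$ and of $T^{-1/2}(M_T,-N_T)^{\top}$.

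First I would establish the law of large numbers
$$
\frac1T\,\mathcal{I}_T\xrightarrow{\mathbf{P}}\mathcal{I}_\infty:=\begin{pmatrix} 1 & -\alpha/\beta \\ -\alpha/\beta & \tfrac{1}{2\beta}+\alpha^2/\beta^2 \end{pmatrix},
$$
i.e. the three scalar limits $C_T/T\to1$, $A_T/T\to\alpha/\beta$ and $B_T/T\to \tfrac{1}{2\beta}+\alpha^2/\beta^2$. The structural fact peculiar to $H<1/2$ is that the bracket $\langle M\rangle_t=\int_0^t g^2(s,s)\,ds$ grows \emph{linearly}, with $\langle M\rangle_T/T\to1$ — in contrast to the sublinear $\langle M\rangle_T\sim c\,T^{2-2H}$ that forces the slower rate $T^{1-H}$ when $H>1/2$. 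Consequently the weighting measure $d\langle M\rangle_t$ is asymptotically Lebesgue measure, and $Q_t$ inherits the stationary first and second moments $\alpha/\beta$ and $\tfrac{1}{2\beta}+\alpha^2/\beta^2$ of the limiting Ornstein--Uhlenbeck process, exactly as in the classical Vasicek case $H=1/2$ where $Q_t=X_t$ and $\langle M\rangle_t=t$.

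Next I would prove the martingale central limit theorem
$$
\frac{1}{\sqrt T}\begin{pmatrix} M_T \\ -N_T \end{pmatrix}\xrightarrow{d}\mathcal{N}\!\left(0,\mathcal{I}_\infty\right).
$$
Here $(M,N)$ is a continuous bivariate martingale with predictable brackets $\langle M\rangle_T=C_T$, $\langle M,N\rangle_T=A_T$, $\langle N\rangle_T=B_T$, so that the normalized bracket of any combination $aM_T-bN_T$ is $T^{-1}(a^2C_T-2abA_T+b^2B_T)\xrightarrow{\mathbf{P}}(a,b)\,\mathcal{I}_\infty\,(a,b)^{\top}$ by the previous step. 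A Cramér--Wold reduction together with the continuous-martingale CLT (time-change to Brownian motion) then yields the Gaussian limit with covariance exactly $\mathcal{I}_\infty$, which is the usual coincidence of the score's variance with the Fisher information. Combining the two displays by Slutsky's theorem,
$$
\sqrt T\begin{pmatrix} \hat{\alpha}_T-\alpha \\ \hat{\beta}_T-\beta \end{pmatrix}=\bigl(T^{-1}\mathcal{I}_T\bigr)^{-1}\frac{1}{\sqrt T}\begin{pmatrix} M_T \\ -N_T \end{pmatrix}\xrightarrow{d}\mathcal{N}\!\left(0,\mathcal{I}_\infty^{-1}\right),
$$
and a direct inversion gives $\mathcal{I}_\infty^{-1}=\bigl(\begin{smallmatrix} 1+2\alpha^2/\beta & 2\alpha \\ 2\alpha & 2\beta \end{smallmatrix}\bigr)$, whose diagonal entries are precisely the variances $1+2\alpha^2/\beta$ and $2\beta$ claimed in \eqref{eq: normality of alpha smaller} and \eqref{eq: normality of beta}.

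The main obstacle is the law-of-large-numbers step — above all the linear growth $\langle M\rangle_T\sim T$ and the identification of $\lim A_T/T$ and $\lim B_T/T$. For $H<1/2$ the kernel $g(s,t)$ no longer satisfies the benign Wiener--Hopf equation valid for $H>1/2$, so these limits are not transparent from the integro-differential equation directly; this is exactly where the Laplace-transform analysis of $\langle M\rangle_t$ and of the covariance structure of $Q_t$ (carried out in the technical lemmas of the Appendix) is indispensable, and where the qualitative difference between the two regimes enters. Once those scalar limits are secured, the remaining CLT and Slutsky steps are routine.
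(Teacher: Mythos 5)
Your argument is correct, but it is not the route the paper takes for this particular theorem: what you have written is, almost line for line, the paper's proof of the joint-normality result, Theorem \ref{th: joint normal} (same linear system $\mathcal{Q}_T(\hat{\vartheta}_T-\vartheta)=\mathcal{R}_T$, same three bracket limits, same inversion of the Fisher information), together with the remark following that proof, which observes that $\mathcal{I}^{-1}(\vartheta)$ has diagonal entries $1+2\alpha^2/\beta$ and $2\beta$ and hence ``indicates'' Theorem \ref{th: convergence normal smaller}. The paper's own proof of Theorem \ref{th: convergence normal smaller} is instead scalar: it returns to the ratio representations \eqref{eq: 16} and \eqref{eq: Estimator error alpha}, substitutes the decomposition $Q_t=\frac{\alpha}{\beta}-\frac{\alpha}{\beta}V(t)+Q_t^U$ of Lemma \ref{lem Qt}, kills the cross terms one by one via Lemmas \ref{limit V}, \ref{quadratic smaller}, \ref{V asymptotic smaller} and \ref{lim Qt U Vt smaller}, and imports from \cite{CK18} the two scalar facts $\frac{1}{T}\int_0^T (Q_t^U)^2 d\langle M\rangle_t \xrightarrow{\mathbf{P}}\frac{1}{2\beta}$ and $\frac{1}{\sqrt{T}}\int_0^T Q_t^U dM_t\xrightarrow{d}\mathcal{N}\left(0,\frac{1}{2\beta}\right)$. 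For $\hat{\alpha}_T$ that scalar route needs an extra ingredient your approach dispenses with: the paper must argue that the two limits in \eqref{eq: minus alpha1} and \eqref{eq: minus alpha2} are driven by asymptotically independent terms in order to add their variances, a step it treats rather informally. Your Cram\'er--Wold formulation absorbs exactly this dependence into the off-diagonal bracket $\langle M,N\rangle_T=A_T$, so the correlation between the two components of the score is handled automatically and the joint law (including the covariance $2\alpha$) comes for free. The price, which you correctly flag, is unchanged: the three LLN limits $C_T/T\to 1$, $A_T/T\xrightarrow{\mathbf{P}}\alpha/\beta$, $B_T/T\xrightarrow{\mathbf{P}}\frac{1}{2\beta}+\frac{\alpha^2}{\beta^2}$ are the genuinely hard, $H$-dependent content (Appendix lemmas plus the Laplace-transform results of \cite{CK18}), and both proofs stand on them equally; so your proposal is a sound, and arguably cleaner, alternative derivation of the stated marginals.
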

\begin{rem}
From the previous theorem, we can see that when $H>1/2$, whether one parameter is unknown or two parameters are unknown together, the asymptotical normality of the estimator error has the same result and, they are also the same as well as the linear case and Ornstein-Uhlenbeck process with pure fBm with Hurst parameter $H>1/2$. But for $H<1/2$, the situation changes, these differences comes from the limit representation of the quadratic variation of the martingale $M=(M_t,\, 0\leq t\leq T)$.
\end{rem}

Now we will consider the joint distribution of the estimator error. For $H<1/2$, if we consider $\vartheta=\left(\begin{array}{c}\alpha \\ \beta \end{array}\right)$ as the  two dimensional unknown parameter, then the following theorem gives us the joint distribution of the estimator error of $\hat{\vartheta}_T$:
\begin{thm}\label{th: joint normal}
The maximum likelihood estimator $\hat{\vartheta}_T=\left(\begin{array}{c}\hat{\alpha}_T \\ \hat{\beta}_T \end{array}\right)$ is asymptotically normal
\begin{equation}\label{eq: joint normal}
\sqrt{T}\left(\hat{\vartheta}_T-\vartheta\right)\xrightarrow{d} \mathcal{N}(\mathbf{0}, \mathcal{I}^{-1}(\vartheta)).
\end{equation}
where $\mathbf{0}=\left(\begin{array}{c}0 \\0\end{array}\right)$ and $\mathcal{I}(\vartheta)=\left(\begin{array}{cc}1 & -\frac{\alpha}{\beta} \\ -\frac{\alpha}{\beta} & \frac{1}{2\beta}+\frac{\alpha^2}{\beta^2}\end{array}\right)$ is the matrix of Fisher Information.
\end{thm}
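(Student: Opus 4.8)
The plan is to recast the joint estimation error as a linear transform of a two-dimensional martingale and then combine a law-of-large-numbers statement for the observed information with a martingale central limit theorem. Starting from the normal equations \eqref{eq: partial derivative} and inserting the dynamics \eqref{eq: Zt} of $Z$, the same algebra that produces \eqref{eq: Estimator error alpha} and \eqref{eq: Estimator error beta} can be written compactly as
\begin{equation*}
\mathbf{J}_T\begin{pmatrix}\hat{\alpha}_T-\alpha\\ \hat{\beta}_T-\beta\end{pmatrix}=\begin{pmatrix} M_T\\ -\int_0^T Q_t\,dM_t\end{pmatrix},\qquad \mathbf{J}_T=\begin{pmatrix}\langle M\rangle_T & -\int_0^T Q_t\,d\langle M\rangle_t\\ -\int_0^T Q_t\,d\langle M\rangle_t & \int_0^T Q_t^2\,d\langle M\rangle_t\end{pmatrix},
\end{equation*}
so that $\sqrt{T}(\hat{\vartheta}_T-\vartheta)=(\mathbf{J}_T/T)^{-1}\,T^{-1/2}S_T$ with $S_T=(M_T,\,-\int_0^T Q_t\,dM_t)^{\top}$. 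The matrix $\mathbf{J}_T$ is exactly the observed Fisher information, and the common denominator in \eqref{eq: Estimator error alpha}--\eqref{eq: Estimator error beta} equals $-\det\mathbf{J}_T$; verifying this identity is the first step.

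The heart of the argument is the triad of probability limits, valid for $H<1/2$ in the stationary regime,
\begin{equation*}
\frac{\langle M\rangle_T}{T}\xrightarrow{\mathbf{P}}1,\qquad \frac{1}{T}\int_0^T Q_t\,d\langle M\rangle_t\xrightarrow{\mathbf{P}}\frac{\alpha}{\beta},\qquad \frac{1}{T}\int_0^T Q_t^2\,d\langle M\rangle_t\xrightarrow{\mathbf{P}}\frac{1}{2\beta}+\frac{\alpha^2}{\beta^2},
\end{equation*}
which together give $\mathbf{J}_T/T\xrightarrow{\mathbf{P}}\mathcal{I}(\vartheta)$. The first reflects that for $H<1/2$ the Brownian component dominates $M$, so $\langle M\rangle_T\sim T$; the other two are ergodic-type averages identifying the stationary moments $\mathbf{E}Q_\infty=\alpha/\beta$ and $\mathbf{E}Q_\infty^2=\tfrac{1}{2\beta}+\alpha^2/\beta^2$. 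I would establish these through the Laplace transform of the relevant quadratic functionals together with the limiting behaviour of the eigenvalues of the covariance operator, the same machinery already underlying the marginal statements of Theorem \ref{th: convergence normal smaller}. As a consistency check, the $(1,1)$ and $(2,2)$ entries of $\mathcal{I}^{-1}(\vartheta)$ reproduce the marginal variances $1+2\alpha^2/\beta$ and $2\beta$ found there.

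For the score I would apply the multivariate martingale central limit theorem to $T^{-1/2}S_T$. Both components are continuous $\mathcal{F}^{Z}$-martingales whose predictable covariation matrix is precisely $\mathbf{J}_T$: indeed $\langle M\rangle_T$, $\int_0^T Q_t^2\,d\langle M\rangle_t$ and $-\int_0^T Q_t\,d\langle M\rangle_t$ are the bracket of $M$, the bracket of $\int_0^\cdot Q\,dM$, and their cross bracket. Hence the three limits above yield $T^{-1/2}S_T\xrightarrow{d}\mathcal{N}(\mathbf{0},\mathcal{I}(\vartheta))$ once a Lindeberg/negligibility condition is checked, which holds because the integrands are in $L^2$ and the brackets grow linearly. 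Slutsky's theorem and the continuous mapping theorem then combine the pieces,
\begin{equation*}
\sqrt{T}(\hat{\vartheta}_T-\vartheta)=\Big(\tfrac{1}{T}\mathbf{J}_T\Big)^{-1}\tfrac{1}{\sqrt{T}}S_T\xrightarrow{d}\mathcal{I}^{-1}(\vartheta)\,\mathcal{N}(\mathbf{0},\mathcal{I}(\vartheta))=\mathcal{N}(\mathbf{0},\mathcal{I}^{-1}(\vartheta)),
\end{equation*}
which is \eqref{eq: joint normal}.

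The main obstacle I anticipate is rigorously pinning down the two ergodic limits for $\int_0^T Q_t\,d\langle M\rangle_t$ and $\int_0^T Q_t^2\,d\langle M\rangle_t$ when $H<1/2$: controlling $Q_t$ requires the fundamental-martingale representation together with the delicate small-$H$ asymptotics of $\langle M\rangle_t$, and it is there that the Laplace-transform estimates do the real work. Everything else — the matrix reformulation, the identification of the score covariance with $\mathbf{J}_T$, and the final Slutsky step — is then essentially bookkeeping.
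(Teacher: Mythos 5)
Your proposal follows essentially the same route as the paper: the paper likewise writes $\hat{\vartheta}_T-\vartheta=\mathcal{Q}_T^{-1}\mathcal{R}_T$ with exactly your matrix $\mathbf{J}_T$ and score vector $S_T$, observes that $\mathcal{R}_t$ is a martingale with quadratic variation $\mathcal{Q}_t$, proves the same three componentwise limits $\langle M\rangle_T/T\to 1$, $\frac{1}{T}\int_0^T Q_t\,d\langle M\rangle_t\to\alpha/\beta$, $\frac{1}{T}\int_0^T Q_t^2\,d\langle M\rangle_t\to\frac{1}{2\beta}+\frac{\alpha^2}{\beta^2}$ via its appendix lemmas and the results of Chigansky--Kleptsyna, and then invokes the Hall--Heyde martingale central limit theorem. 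Your write-up is in fact somewhat more explicit than the paper's about the final Slutsky step and the Lindeberg condition, and both you and the authors flag the same caveat, namely that a fully rigorous argument would control the Laplace transform of the bracket matrix rather than only its entrywise limits.
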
 

\begin{rem}
From theorem \ref{th: convergence normal smaller} we can see that the convergence rates of $\hat{\alpha}_T$ and $\hat{\beta}_T$ are the same and we can use the central limit theorem of martingale in the proof. On the contrary, when $H>1/2$ the convergence rates are different which causes a lot of difficulties and we leave it for further study.
\end{rem}

In the above discussions we concern on the asymptotical laws of the estimators however even in \cite{CK18} with $\alpha=0$, the authors have not considered the strong consistency of $\hat{\beta}_T$. In what follows, we conclude that $\hat{\beta}_T$ converges to $\beta$ almost surely.

\begin{thm}\label{thm: convergence strong}
For $H\in (0,1),\, H\neq 1/2$, the estimators of $\hat{\beta}_T$ is strong consistency, that is, as $T\rightarrow\infty$,
\begin{equation}
\hat{\beta}_T\xrightarrow{a.s.}\beta. 
\end{equation}
\end{thm}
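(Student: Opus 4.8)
The plan is to start from the exact error representation \eqref{eq: Estimator error beta} and reduce the strong consistency of $\hat\beta_T$ to the strong law of large numbers for continuous martingales together with the almost sure growth rates of a few additive functionals. Write $N_T=\int_0^T Q_t\,dM_t$, $A_T=\int_0^T Q_t^2\,d\langle M\rangle_t$, $B_T=\int_0^T Q_t\,d\langle M\rangle_t$ and $C_T=\langle M\rangle_T$, so that $N$ is a continuous local martingale with $\langle N\rangle_T=A_T$, while $M$ has bracket $C_T$. Dividing the numerator and denominator of \eqref{eq: Estimator error beta} by $C_TA_T$ gives
\begin{equation*}
\hat\beta_T-\beta=\frac{\dfrac{M_T}{C_T}\cdot\dfrac{B_T}{A_T}-\dfrac{N_T}{A_T}}{1-\rho_T^2},\qquad \rho_T^2:=\frac{B_T^2}{C_TA_T}\le 1,
\end{equation*}
where the inequality is the Cauchy--Schwarz bound already used to confirm the maximum in \eqref{eq: partial derivative}. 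Thus it suffices to show that the numerator tends to $0$ almost surely and that $1-\rho_T^2$ stays bounded away from $0$.

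First I would treat the two martingale ratios. Since $C_T=\langle M\rangle_T=\int_0^T g^2(s,s)\,ds\to\infty$ and $A_T\to\infty$ almost surely (the integrands are nonnegative and the process is non-degenerate), the strong law of large numbers for continuous local martingales yields $N_T/A_T\to 0$ and $M_T/C_T\to 0$ a.s. The first of these already gives $\tilde\beta_T-\beta=N_T/A_T\to0$ a.s., i.e.\ the one-parameter estimator \eqref{eq: tilde beta error} is strongly consistent; the second controls the cross term once the factor $B_T/A_T$ is shown to be a.s.\ bounded.

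The crux is the almost sure asymptotic behaviour of $A_T,B_T,C_T$, which I would obtain from strong laws (via the Laplace-transform and ergodic analysis underlying Theorems \ref{th: convergence normal}--\ref{th: joint normal}), distinguishing the two regimes. For $H>1/2$ one has $A_T$ of exact order $T$ while $C_T=\langle M\rangle_T$ and $B_T$ grow only like $T^{2-2H}$; hence $B_T/A_T$ and $\rho_T^2$ are both of order $T^{1-2H}\to 0$, so the cross term and $\rho_T^2$ vanish and the denominator tends to $1$. For $H<1/2$ all three functionals grow linearly, and writing $A_T/T\to a$, $B_T/T\to b$, $C_T/T\to c$ almost surely (with $a,c>0$) gives $B_T/A_T\to b/a$ bounded and $\rho_T^2\to b^2/(ac)$; the strict Cauchy--Schwarz inequality (the stationary integrand $Q_t$ is not a.s.\ constant) forces $b^2/(ac)<1$, so $\liminf_T(1-\rho_T^2)>0$. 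In either case the numerator of the displayed identity converges to $0$ a.s.\ and the denominator is bounded away from $0$, whence $\hat\beta_T\xrightarrow{a.s.}\beta$.

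The main obstacle I anticipate is precisely this last step: upgrading the convergence of the normalized functionals $A_T,B_T,C_T$ from the in-probability and distributional statements that suffice for asymptotic normality to genuine almost sure limits, and verifying the strict gap $\liminf_T(1-\rho_T^2)>0$. In the fractional regime the reference measure $d\langle M\rangle_t=g^2(t,t)\,dt$ is not Lebesgue, so the relevant strong law is not a direct ergodic theorem; I would establish it by combining $L^2$ estimates of $A_T/T$, $B_T/T$, $C_T/T$ (or their fractional-rate analogues for $H>1/2$) with a Borel--Cantelli argument, exploiting the monotonicity of $A_T$ and $C_T$ to pass from a convergent subsequence to the full continuous limit.
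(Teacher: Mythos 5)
Your overall architecture is the same as the paper's --- reduce \eqref{eq: Estimator error beta} to the strong law of large numbers for continuous martingales plus control of the bracket functionals --- and your unified $\rho_T^2$-decomposition is in fact tidier than the paper's split into the cases $\alpha=0$ and $\alpha\neq 0$. But there is a genuine gap at the one step the whole argument hinges on. You justify $A_T=\int_0^T Q_t^2\,d\langle M\rangle_t\to\infty$ a.s.\ by saying ``the integrands are nonnegative and the process is non-degenerate.'' That is not an argument: a nonnegative integrand can perfectly well give a finite limit, and for $H>1/2$ this is a real danger because the reference measure itself degenerates, $d\langle M\rangle_t/dt\sim t^{1-2H}\to 0$ (Lemma~\ref{quadratic variation big}). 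Divergence of $A_T$ is exactly what must be \emph{proved} before the martingale SLLN applies, and it is precisely here that the paper invests its key lemma: the Laplace-transform asymptotics (from Marushkevych)
\begin{equation*}
\frac{1}{T}\log \mathbf{E}\exp\Bigl(-\mu \int_0^T \bigl(Q_t^U\bigr)^2 d\langle M\rangle_t\Bigr)\longrightarrow \frac{\beta}{2}-\sqrt{\frac{\beta^2}{4}+\frac{\mu}{2}}<0,\qquad \mu>0,
\end{equation*}
which forces $\mathbf{E}\exp\bigl(-\mu\int_0^T(Q_t^U)^2 d\langle M\rangle_t\bigr)\to 0$, hence divergence in probability, and then divergence a.s.\ by monotonicity of $T\mapsto\int_0^T(Q_t^U)^2 d\langle M\rangle_t$, giving \eqref{eq: to infinite quadratic}. (The same monotonicity device shows that an in-probability statement such as \eqref{eq: convergence in prob} or \eqref{eq: joint 3} would also suffice for divergence; but some such argument has to be supplied, and yours is not one.)

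The second problem is that your route then demands strictly more than the paper uses: genuine almost-sure limits of $A_T/T$ and $B_T/T$ (or a.s.\ rates when $H>1/2$) in order to control $B_T/A_T$ and to keep $1-\rho_T^2$ away from $0$, and you explicitly defer these to an unexecuted $L^2$/Borel--Cantelli program. That deferred step is the actual mathematical content, not a routine verification: all the relevant statements available in the paper for $H>1/2$ (Lemma~\ref{limit Q U}, Lemma~\ref{limit Qt}, \eqref{eq: joint 2}, \eqref{eq: joint 3}) are only in probability or in $L^2$, and no a.s.\ rate for $\int_0^T Q_t^U\,d\langle M\rangle_t$ is established anywhere (only for $H<1/2$ does Lemma~\ref{lim Qt U Vt smaller} give pathwise bounds, via H\"older continuity). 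So, as written, your proposal proves strong consistency only conditionally on unproven almost-sure laws. In fairness, the paper's own handling of the $\alpha\neq 0$ cross terms (``is bounded,'' ``as well as the second term'') is sketchy on exactly the same point; but the paper does supply the one indispensable ingredient --- the a.s.\ divergence of the quadratic functional via the Laplace transform --- which is the ingredient missing from your attempt.
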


\begin{rem}
For the estimator $\hat{\alpha}_T$, the strong consistency is clear when $\beta=0$ and the same proof for $\beta$ unknown, that is why we do not write this conclusion.
\end{rem}

\section{Proofs of the Main results}\label{proof main results}
\subsection{Proof of Theorem \ref{th: asymptotical tilde beta}}
From \eqref{eq: tilde beta error}, we have
$$
\sqrt{T}(\tilde{\beta}_T-\beta)=\frac{\frac{1}{\sqrt{T}}\int_0^T Q_t dM_t}{\frac{1}{T}\int_0^T Q_t^2d\langle M\rangle_t}.
$$
In fact the process $\left(\int_0^t Q_s dM_s,0\leq t\leq T\right)$ is a martingale and we can see that  when $H>1/2$,
$$
\frac{1}{T}\int_0^T Q_t^2d\langle M\rangle_t \xrightarrow {\mathbf{P}} \frac{1}{2\beta}
$$
and when $H<1/2$
$$
\frac{1}{T}\int_0^T Q_t^2 d\langle M\rangle_t\xrightarrow {\mathbf{P}}\left(\frac{\alpha}{\beta}   \right)^2+\frac{1}{2\beta}.
$$
The central limit theorem of martingale (see \cite{HH80}) achieves the proof.

\subsection{Proof of Theorem \ref{th: convergence normal}}
First, we consider the asymptotical normality of $\hat{\beta}_T$. Using \eqref{eq: Estimator error beta}, we have
\begin{equation}\label{eq: 16}
\sqrt{T}\left(\hat{\beta}_T-\beta\right)=\frac{\frac{1}{\sqrt{T}}\int_0^T Q_tdM_t-\frac{M_T}{\langle M\rangle_T}\frac{1}{\sqrt{T}}\int_0^TQ_td\langle M\rangle_t}{\left(\frac{1}{\sqrt{T \langle M\rangle_T}}\int_0^TQ_td\langle M\rangle_t\right)^2-\frac{1}{T}\int_0^TQ_t^2d\langle M\rangle_t}\,,
\end{equation}
where $M_T$ is a centered Gaussian random variable with variation $\langle M\rangle_T$.  Using Lemma \ref{limit V} and Lemma \ref{limit Q U}, we can obtain
\begin{equation}\label{eq: 17}
\frac{M_T}{\langle M\rangle_T}\frac{1}{\sqrt{T}}\int_0^TQ_td\langle M\rangle_t\xrightarrow{\mathbf{P}} 0\,,
\end{equation}
and
\begin{equation}\label{eq: 18}
\left(\frac{1}{\sqrt{T\langle M\rangle_T}}\int_0^TQ_td\langle M\rangle_t\right)^2\xrightarrow{\mathbf{P}}0\,.
\end{equation}

Combining \eqref{eq: 16}-\eqref{eq: 18} with Lemma \ref{limit Qt}, we can obtain \eqref{eq: normality of beta}.

Now, we deal with the convergence of $\hat{\alpha}_T$. From \eqref{eq: Estimator error alpha}, we can easily have
\begin{equation}\label{eq: 19}
T^{1-H}\left(\hat{\alpha}_T-\alpha\right)=\frac{\frac{1}{\sqrt{T}} \int_0^T Q_tdM_t   \frac{T^{1-H}}{\langle M\rangle_T\sqrt{T}}\int_0^T Q_td\langle M\rangle_t-T^{1-H}\frac{M_T}{\langle M\rangle_T}\frac{1}{T}\int_0^T Q_t^2d\langle M\rangle_t                   }{ \left(\frac{1}{\sqrt{T\langle M\rangle_T}}\int_0^T Q_td\langle M\rangle_t\right)^2 -\frac{1}{T}\int_0^T Q_t^2d\langle M\rangle_t}\,.
\end{equation}

It is worth noting that
\begin{equation}\label{eq: 20}
\frac{1}{\sqrt{T}} \int_0^T Q_tdM_t   \frac{T^{1-H}}{\langle M\rangle_T\sqrt{T}}\int_0^T Q_td\langle M\rangle_t\xrightarrow{\mathbf{P}}0\,,
\end{equation}
and
\begin{equation}\label{eq: 21}
 \left(\frac{1}{\sqrt{T\langle M\rangle_T}}\int_0^T Q_td\langle M\rangle_t\right)^2 \xrightarrow{\mathbf{P}}0.
\end{equation}

Moreover, from \cite{CK18}, we can see that
\begin{equation}\label{eq: 22}
 T^{1-H}\frac{M_T}{\langle M\rangle_T}\xrightarrow{d}\mathcal{N}(0, v_H),
\end{equation}
where $ v_H=\frac{2H\Gamma(H+1/2)\Gamma(3-2H)}{\Gamma(3/2-H)}$.
Finally, combining \eqref{eq: 19}-\eqref{eq: 22}, we can obtain \eqref{eq: normality of alpha}.

\subsection{Proof of Theorem \ref{th: convergence normal smaller}}
For $\hat{\beta}_T$, let us relook at the equation \eqref{eq: 16} with $H<1/2$. First of all let us develop the denominator, 
\begin{eqnarray*}
\left(\int_0^T Q_td\langle M\rangle_t \right)^2&=&\left(\frac{\alpha}{\beta}\right)^2\langle M\rangle_T^2+\left(\frac{\alpha}{\beta}\right)^2\left(\int_0^T V(t)d\langle M\rangle_t  \right)^2+\left(\int_0^T Q_t^Ud\langle M\rangle_t \right)^2\\
&-&2\left(\frac{\alpha}{\beta}  \right)^2\langle M\rangle_T\int_0^T V(t)d\langle M\rangle_t+2\left(\frac{\alpha}{\beta}\right)\langle M\rangle_T\int_0^T Q_t^Ud\langle M\rangle_t\\
&-&2\left(\frac{\alpha}{\beta}\right)\int_0^T V(t)d\langle M\rangle_t\int_0^T Q_t^Ud\langle M\rangle_t
\end{eqnarray*}
where $V(t)$ and $Q_t^U$ are defined in Lemma \ref{lem Qt}. On the other hand
\begin{eqnarray*}
\int_0^T Q_t^2d\langle M\rangle_t&=&\left(\frac{\alpha}{\beta}\right)^2\langle M\rangle_T+\left(\frac{\alpha}{\beta}\right)^2\int_0^T V^2(t)d\langle M\rangle_t+\int_0^T (Q_t^U)^2d\langle M\rangle_t\\
&-&2\left(\frac{\alpha}{\beta}\right)^2\int_0^T V(t)d\langle M\rangle_t+2\left(\frac{\alpha}{\beta}\right)\int_0^T Q_t^Ud\langle M\rangle_t-2\left(\frac{\alpha}{\beta}\right)\int_0^T V(t)Q_t^Ud\langle M\rangle_t
\end{eqnarray*}
Consequently, we have
\begin{eqnarray*}
\frac{1}{T\langle M\rangle_T}\left(\int_0^T Q_td\langle M\rangle_t \right)^2-\frac{1}{T}\int_0^T Q_t^2d\langle M\rangle_t&=&\frac{1}{T\langle M\rangle_T} \left(\frac{\alpha}{\beta}\right)^2\left(\int_0^T V(t)d\langle M\rangle_t  \right)^2+\frac{1}{T\langle M\rangle_T}  \left(\int_0^T Q_t^Ud\langle M\rangle_t \right)^2\\
&-&\frac{2}{T\langle M\rangle_T}\left(\frac{\alpha}{\beta}\right)\int_0^T V(t)d\langle M\rangle_t\int_0^T Q_t^Ud\langle M\rangle_t-\frac{1}{T}\int_0^T (Q_t^U)^2d\langle M\rangle_t\\
&-&\frac{1}{T}\left(\frac{\alpha}{\beta}\right)^2\int_0^T V^2(t)d\langle M\rangle_t+\frac{2}{T}\left(\frac{\alpha}{\beta}\right)\int_0^T V(t)Q_t^Ud\langle M\rangle_t.
\end{eqnarray*}
We study it one by one.  From Lemma \ref{limit V}, Lemma \ref{quadratic smaller} and Lemma \ref{V asymptotic smaller} we have
$$
\frac{1}{T\langle M\rangle_T} \left(\frac{\alpha}{\beta}\right)^2\left(\int_0^T V(t)d\langle M\rangle_t  \right)^2\rightarrow 0,\,\,  \frac{1}{T}\left(\frac{\alpha}{\beta}\right)^2\int_0^T V^2(t)d\langle M\rangle_t\rightarrow 0,\,\,             T\rightarrow 0.
$$
From \cite{CK18}, we can easily obtain
$$
\frac{1}{T}\int_0^T (Q_t^U)^2 d\langle M\rangle_t \xrightarrow{\mathbf{P}}\frac{1}{2\beta}.
$$
using the Lemma \ref{lim Qt U Vt smaller} we have 
\begin{equation}\label{eq: denominator}
\frac{1}{T\langle M\rangle_T}\left(\int_0^T Q_td\langle M\rangle_t \right)^2-\frac{1}{T}\int_0^T Q_t^2d\langle M\rangle_t\xrightarrow {\mathbf{P}}\frac{1}{2\beta}.
\end{equation}

Now we consider the numerator, 
\begin{eqnarray*}
\frac{1}{\sqrt{T}}\int_0^TQ_tdM_t-\frac{M_t}{\langle M\rangle_T \sqrt{T}}\int_0^T Q_td\langle M\rangle_t&=&-\left(\frac{\alpha}{\beta}\right)\frac{1}{\sqrt{T}}\int_0^TV(t)dM_t+\frac{1}{\sqrt{T}}\int_0^T Q_t^UdM_t\\
&+&\frac{M_T}{\langle M\rangle_T\sqrt{T}}\int_0^T \left(\left(\frac{\alpha}{\beta}\right)V(t)-Q_t^U\right)d\langle M\rangle_t
\end{eqnarray*}
From the previous proof it is not difficult to show that:
$$
-\left(\frac{\alpha}{\beta}\right)\frac{1}{\sqrt{T}}\int_0^TV(t)dM_t\xrightarrow{\mathbf{P}}0,\, \frac{M_T}{\langle M\rangle_T\sqrt{T}}\int_0^T \left(\left(\frac{\alpha}{\beta}\right)V(t)-Q_t^U\right)d\langle M\rangle_t\xrightarrow{\mathbf{P}}0,\, T\rightarrow \infty.
$$
with the fact in \cite{CK18}:
$$
\frac{1}{\sqrt{T}}\int_0^T Q_t^UdM_t\xrightarrow{d}\mathcal{N}\left(0,\frac{1}{2\beta}\right),
$$
we have 
\begin{equation}\label{eq: numerator}
\frac{1}{\sqrt{T}}\int_0^TQ_tdM_t-\frac{M_t}{\langle M\rangle_T \sqrt{T}}\int_0^T Q_td\langle M\rangle_t\xrightarrow {d}\mathcal{N}\left(0,\frac{1}{2\beta}\right).
\end{equation}
Then combining the equation \eqref{eq: denominator} with \eqref{eq: numerator} it is easy to obtain
$$
\sqrt{T}\left(\hat{\beta}_T-\beta\right)\xrightarrow {d}\mathcal{N}(0, 2\beta).
$$
Now we will look at $\hat{\alpha}_T$. In fact
$$
\sqrt{T}(\hat{\alpha}_T-\alpha)=\frac{\frac{1}{\langle M\rangle_T\sqrt{T}}\int_0^TQ_tdM_t \int_0^T Q_td\langle M\rangle_t-\frac{1}{\sqrt{T}}\frac{M_T}{\langle M\rangle_T} \int_0^T Q_t^2d\langle M\rangle_t                      }{\left(\frac{1}{\sqrt{T \langle M\rangle_T}}\int_0^TQ_td\langle M\rangle_t\right)^2-\frac{1}{T}\int_0^TQ_t^2d\langle M\rangle_t}.
$$
We observe that the denominator is the same formula in $\beta$ and it adapt the equation \eqref{eq: denominator}, we only need to consider the numerator. From Lemma \ref{limit V} and Lemma \ref{lim Qt U Vt smaller} it is easy to know
\begin{equation}\label{eq: 1}
\frac{1}{\langle M\rangle_T}\int_0^T Q_t d\langle M\rangle_t\xrightarrow {a.s.} \frac{\alpha}{\beta}
\end{equation}
For the numerator, 
$$
\frac{1}{\langle M\rangle_T\sqrt{T}}\int_0^TQ_tdM_t \int_0^T Q_td\langle M\rangle_t=\frac{T}{\langle M\rangle_T}\left(\frac{1}{\sqrt{T}}\int_0^TQ_tdM_t \frac{1}{T}\int_0^T Q_td\langle M\rangle_t\right)
$$
With Lemma \ref{limit V}, Lemma \ref{quadratic smaller} and  Lemma \ref{V asymptotic smaller}
\begin{equation}\label{eq: minus alpha1}
\frac{1}{\langle M\rangle_T\sqrt{T}}\int_0^TQ_tdM_t \int_0^T Q_td\langle M\rangle_t-\left(\frac{\alpha}{\beta}   \right)^2\frac{M_T}{\sqrt{T}}\xrightarrow {d}\frac{\alpha}{\beta}\mathcal{N}\left(0,\frac{1}{2\beta}\right)
\end{equation}
on the other hand
\begin{equation}\label{eq: minus alpha2}
\frac{1}{\sqrt{T}}\frac{M_T}{\langle M\rangle_T} \int_0^T Q_t^2d\langle M\rangle_t -\left(\frac{\alpha}{\beta}\right)^2\frac{M_T}{\sqrt{T}}\xrightarrow{d}\frac{1}{2\beta}\mathcal{N}(0,1).
\end{equation}
The further study tell us that the two convergence in distribution of \eqref{eq: minus alpha1} and \eqref{eq: minus alpha2} comes from the term $\frac{1}{\sqrt{T}}\int_0^T Q_t^U dM_t$ and $\frac{\langle M\rangle_T}{\sqrt{T}}$, when $M=(M_t,\, 0\leq t \leq T)$ is a martingale, then these two terms are asymptotically independent and then from equation \eqref{eq: 1}, \eqref{eq: minus alpha1} and \eqref{eq: minus alpha2} we can easily obtain
$$
\sqrt{T}(\hat{\alpha}_T-\alpha)\xrightarrow {d} \mathcal{N}\left(0, \frac{2\alpha^2}{\beta}+1     \right).
$$

\subsection{Proof of Theorem \ref{th: joint normal}}
From equations \eqref{eq: Estimator error alpha} and \eqref{eq: Estimator error beta} we have 
$$
\hat{\vartheta}_T-\vartheta=\left(\begin{array}{c}\hat{\alpha}_T \\\hat{\beta}_T\end{array}\right)-\left(\begin{array}{c}\alpha \\\beta\end{array}\right)=\mathcal{Q}^{-1}_T\mathcal{R}_T
$$ 
where 
$$
\mathcal{R}_T=\left(\begin{array}{c}M_T \\   -\int_0^T Q_t dM_t   \end{array}\right),\,\, \mathcal{Q}_T =\left(\begin{array}{cc}\langle M\rangle_T & -\int_0^T Q_t d\langle M\rangle_t \\  -\int_0^T Q_t d\langle M\rangle_t   & \int_0^T Q_t^2 d\langle M\rangle_t \end{array}\right).
$$
We can see that $\mathcal{R}_t,\, 0\leq t \leq T$ is a martingale and $\mathcal{Q}_t$ is its quadratic variation. Strictly speaking, in order to use the central limit theorem for martingale (see \cite{HH80}) it is better we can compute the Laplace Transform for $\mathcal{Q}_T$ to achieve the proof, but when the quadratic formula of $\int_0^T \left(Q_t^U\right)^2d\langle M\rangle_t$ has been verified in \cite{CK18}, here we just study the asymptotical properties of every component of $\mathcal{Q}_T$.

First of all, from \cite{CK18} we know 
\begin{equation}\label{eq: joint 1}
\lim_{T\rightarrow \infty} \frac{1}{T}\langle M\rangle_T=1.
\end{equation}
On the other hand from Lemma \ref{lem Qt}, Lemma \ref{limit V} and Lemma \ref{lim Qt U Vt smaller} we have 
\begin{equation}\label{eq: joint 2}
\frac{1}{T}\int_0^T Q_t d\langle M\rangle_t\xrightarrow{\mathbf{P}}\frac{\alpha}{\beta}.
\end{equation}
At last from Lemma \ref{lem Qt}, Lemma \ref{limit V}, Lemma \ref{lim Qt U Vt smaller} and \cite{CK18} we have 
\begin{equation}\label{eq: joint 3}
\frac{1}{T}\int_0^T Q_t^2d\langle M\rangle_t\xrightarrow {\mathbf{P}}\frac{1}{2\beta}+\frac{\alpha^2}{\beta^2}
\end{equation}
The limits of \eqref{eq: joint 1}, \eqref{eq: joint 2} and \eqref{eq: joint 3} achieve the proof.

\begin{rem}
In fact, it is easy to calculate
$$
\mathcal{I}^{-1}(\vartheta)=\left(\begin{array}{cc}1+\frac{2\alpha^2}{\beta} & 2\alpha \\  2\alpha & 2\beta\end{array}\right)
$$
which indicates Theorem \ref{th: convergence normal smaller}.
\end{rem}

\subsection{Proof of Theorem \ref{thm: convergence strong}}
We will prove the central limit theorem of $\hat{\beta}_T$. For $\hat{\alpha}_T$, the proof is similar. To simplify notation, we first assume $\alpha=0$. Then, using the fact $\alpha=0$, we can write
$$
\hat{\beta}_T-\beta=\frac{\int_0^T Q_t^U dM_t}{\int_0^T \left(Q_t^U\right)^2d\langle M\rangle_t}\,.
$$

Due to the strong law of large numbers, to get the convergence almost surely, it suffices to prove
\begin{equation}\label{eq: to infinite quadratic}
\int_0^T \left(Q_t^U \right)^2d\langle M\rangle_t\xrightarrow{a.s.}\infty\,.
\end{equation}

From the Appendix of \cite{Maru16}, if we define
$$
\mathcal{K}_T(\mu)=\frac{1}{T}\log \mathbf{E}\exp\left( -\mu \int_0^T \left(Q_t^U\right)^2d\langle M\rangle_t  \right),
$$
then 
$$
\lim_{T\rightarrow \infty} \mathcal{K}_T(\mu)=\frac{\beta}{2}-\sqrt{\frac{\beta^2}{4}+\frac{\mu}{2}}\,,
$$
for all $\mu>-\frac{\beta^2}{2}$. When $\mu>0$, the limit of the Laplace transform can be written as
$$
\lim_{T\rightarrow \infty}\mathbf{E}\left(-\mu \int_0^T \left(Q_t^U\right)^2 d\langle M\rangle_t\right)=0\,,
$$
which achieves \eqref{eq: to infinite quadratic}. 

Now we turn to the case of $\alpha \neq 0$, in this situation, using \eqref{eq: Estimator error beta}, we have
$$
\hat{\beta}_T-\beta=\frac{\langle M\rangle_T\int_0^TQ_tdM_t}{\left(\int_0^T Q_td\langle M\rangle_t\right)^2-\langle M\rangle_T\int_0^TQ_t^2d\langle M\rangle_t}-
\frac{\frac{M_T}{T\langle M\rangle_T}\int_0^TQ_td\langle M\rangle_t}{\left(\frac{1}{\sqrt{T\langle M\rangle_T}}\int_0^T Q_td\langle M\rangle_t\right)^2-\frac{1}{T}\int_0^TQ_t^2d\langle M\rangle_t}
$$

For the first term of the above equation, we can write 
$$
\frac{\langle M\rangle_T\int_0^TQ_tdM_t}{\left(\int_0^T Q_td\langle M\rangle_t\right)^2-\langle M\rangle_T\int_0^TQ_t^2d\langle M\rangle_t}=\frac{1}{\frac{\left(\int_0^T Q_td\langle M\rangle_t\right)^2}{\langle M\rangle_T\int_0^TQ_tdM_t}-\frac{\int_0^TQ_t^2d\langle M\rangle_t}{\int_0^TQ_tdM_t}}\,.
$$

From the proof of Lemma \ref{limit Qt} and equation \eqref{eq: to infinite quadratic}, we see immediately that 
$$
\frac{\int_0^TQ_t^2d\langle M\rangle_t}{\int_0^TQ_tdM_t}\xrightarrow{} \infty.
$$

With the previous proofs, we obtain
$$
\frac{\left(\int_0^T Q_td\langle M\rangle_t\right)^2}{\langle M\rangle_T\int_0^TQ_tdM_t}
$$
is bounded. This shows that first term tends to $0$ almost surely as well as the second term. Hence $H>1/2$ we have the strong consistency. The result for $H<1/2$ can be proved with the same method and we will not present again.

\section{Appendix: Auxiliary Results}
This section contains some technical results needed in the proofs of the main theorems of the paper. First, we introduce an important result from \cite{CK18}, which is provided by the following Lemma.  
\begin{lem}\label{quadratic variation big}
For $H>1/2$, we have
$$
\frac{d}{dT}\langle M\rangle_T \sim T^{1-2H},\,\,\, \left(\frac{d}{dT}\log \frac{d}{dT}\langle M\rangle_T      \right)^2 \sim T^{-2},\,\,\, T\rightarrow \infty.
$$
\end{lem}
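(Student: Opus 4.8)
The plan is to reduce the statement to the large-$T$ behaviour of the diagonal of the kernel $g$, and then extract both asymptotics from it. The starting point is the representation \eqref{eq: innovation}, namely $\langle M\rangle_t=\int_0^t g^2(s,s)\,ds$, which gives at once
$$
\frac{d}{dT}\langle M\rangle_T=g^2(T,T).
$$
Hence the first assertion is equivalent to showing $g(T,T)\sim c\,T^{1/2-H}$ for a suitable constant $c$, and the whole lemma is governed by the endpoint value of the solution of the Wiener-Hopf equation.

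Second, I would analyse the Wiener-Hopf equation
$$
g(s,T)+H(2H-1)\int_0^T g(r,T)|r-s|^{2H-2}\,dr=1,\qquad 0\le s\le T,
$$
to obtain the behaviour of its diagonal as $T\to\infty$. The natural route exploits the fact that for $H>1/2$ the mfBm is dominated at large scales by its fractional component: since $t^{1/2}<t^{H}$ for large $t$, the observation $\xi$ is asymptotically governed by $B^{H}$, so the fundamental martingale of the mixture should inherit the rate of the pure-fBm Molchan-Norros martingale, for which $\frac{d}{dt}\langle M\rangle_t^{\,\mathrm{fBm}}=v_H^{-1}\,t^{1-2H}$ exactly, with $v_H=\frac{2H\Gamma(H+1/2)\Gamma(3-2H)}{\Gamma(3/2-H)}$. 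Concretely, I would compare $g(\cdot,T)$ with the explicit fBm kernel and show that the extra Brownian term perturbs the diagonal only at lower order; the self-similarity $B^H_{aT}\overset{d}{=}a^H B^H_T$ fixes the exponent $1/2-H$ by a scaling argument. Alternatively, and more in the spirit of the paper, one may take the Laplace transform of the equation, carry out the Wiener-Hopf factorization of the symbol attached to $|r-s|^{2H-2}$, and read off the boundary-layer behaviour of $g(\cdot,T)$ near $s=T$ through a Tauberian argument. Either route yields $g(T,T)\sim c\,T^{1/2-H}$ and hence $\frac{d}{dT}\langle M\rangle_T\sim T^{1-2H}$, as recorded in \cite{CK18}.

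Finally, the second assertion follows by logarithmic differentiation of the first. From $\frac{d}{dT}\langle M\rangle_T\sim c\,T^{1-2H}$ one gets $\log\frac{d}{dT}\langle M\rangle_T=\log c+(1-2H)\log T+o(1)$, whence $\frac{d}{dT}\log\frac{d}{dT}\langle M\rangle_T\sim (1-2H)/T$ and therefore $\bigl(\frac{d}{dT}\log\frac{d}{dT}\langle M\rangle_T\bigr)^2\sim(1-2H)^2\,T^{-2}\asymp T^{-2}$.

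The main obstacle is the rigorous passage from the Wiener-Hopf equation to the sharp diagonal estimate $g(T,T)\sim c\,T^{1/2-H}$: the kernel $|r-s|^{2H-2}$ is weakly singular and the solution develops a boundary layer at the endpoint $s=T$, so the diagonal value must be controlled with care rather than read off directly. More delicate still is the second assertion, since differentiating an asymptotic equivalence is not automatic: one needs the error in $g(T,T)\sim c\,T^{1/2-H}$ to be differentiable with a correspondingly controlled derivative, i.e. the expansion must hold in a $C^{1}$ sense and not merely pointwise, which is precisely what makes $\frac{d}{dT}\log\frac{d}{dT}\langle M\rangle_T$ tractable.
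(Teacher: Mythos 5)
Your reduction is sound as far as it goes: by \eqref{eq: innovation} one has $\frac{d}{dT}\langle M\rangle_T=g^2(T,T)$, so the lemma is equivalent to a diagonal estimate $g(T,T)\sim c\,T^{1/2-H}$ holding in a $C^1$ sense, and your logarithmic-differentiation step for the second relation is then correct. You should know, however, that the paper does not prove this lemma at all: it is imported from \cite{CK18} ("an important result from \cite{CK18}"), and your second route --- Laplace transform plus Wiener--Hopf factorization of the kernel equation --- is essentially the route taken in that reference. So there is no internal proof to compare against; the question is whether your sketch stands on its own, and there it has a genuine gap: the entire content of the lemma \emph{is} the sharp, differentiable diagonal asymptotics of $g$, and that is precisely the step you describe but do not carry out. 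Flagging the boundary layer at $s=T$ and the need for a $C^1$ expansion (rather than a pointwise asymptotic equivalence, which cannot legitimately be differentiated) identifies the difficulty; it does not resolve it, and both asymptotic relations in the statement must ultimately be read off from the transform analysis directly.

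Moreover, your first proposed route --- comparing $g(\cdot,T)$ with the explicit pure-fBm kernel and showing that ``the extra Brownian term perturbs the diagonal only at lower order'' --- would fail exactly where you need it. For $H>1/2$ the fundamental-martingale kernel of pure fBm is proportional to $s^{1/2-H}(T-s)^{1/2-H}$, which \emph{diverges} as $s\uparrow T$, whereas the mixed kernel satisfies $0\le g(s,T)\le 1$ (this bound is what the paper uses to prove Lemma \ref{limit V}). Near the diagonal the Brownian component is not a small correction: it is what turns the first-kind equation of pure fBm into a second-kind equation and makes $g(T,T)$ finite at all. Hence the diagonal value is governed by the interplay of both components, and no perturbative comparison with the Molchan--Norros kernel can yield $g(T,T)\sim c\,T^{1/2-H}$; the self-similarity scaling argument fixes the exponent only heuristically. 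The workable path is the factorization/Tauberian analysis of the full mixed equation as in \cite{CK18}, with the derivative estimate extracted from that expansion rather than deduced from the first relation.
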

\begin{lem}\label{quadratic smaller}
For $H<1/2$, we have 
$$
\frac{d}{dT}\langle M\rangle_T \sim const.,\,\,\, \left(\frac{d}{dT}\log \frac{d}{dT}\langle M\rangle_T    \right)^2\sim T^{-2},\,\,\, T\rightarrow \infty.
$$
\end{lem}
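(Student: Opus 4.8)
The plan is to reduce both assertions to the large-time behaviour of the single diagonal value $g(T,T)$. Indeed, by the density form of \eqref{eq: innovation} (and its analogue for $H<1/2$) one has $\frac{d}{dT}\langle M\rangle_T=g^{2}(T,T)$, where $g(\cdot,T)$ solves the integro-differential equation of Section~\ref{preliminary}. Hence the first claim, $\frac{d}{dT}\langle M\rangle_T\sim\mathrm{const}$, is equivalent to showing that $g(T,T)$ converges to a strictly positive constant as $T\to\infty$, while the second claim is a statement about the rate at which that limit is approached.

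First I would take the Laplace transform of the integro-differential equation in the time variable, which converts the convolution against the kernel $|r-s|^{2H-1}\,\mathrm{sign}(s-r)$ into multiplication by its symbol and casts the equation as a Wiener--Hopf problem on the half-line. The relevant symbol is, up to constants, the spectral density of the mixed noise $\dot\xi=\dot W+\dot B^{H}$, namely $f(\lambda)=1+c_{H}|\lambda|^{1-2H}$, the sum of the flat white-noise part contributed by $W$ and the fractional part $|\lambda|^{1-2H}$ contributed by $B^{H}$. The large-$T$ behaviour of $g(T,T)$ is then read off from the behaviour of the transform at its dominant singularity (the low-frequency regime $\lambda\to 0$) by a Tauberian theorem.

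The decisive feature of the range $H<1/2$, where $1-2H>0$, is that at low frequencies the flat white-noise part dominates, so the Wiener--Hopf factorization yields a non-degenerate, strictly positive limiting innovation rate; the Tauberian argument then gives $g(T,T)\to c>0$, i.e. $\frac{d}{dT}\langle M\rangle_T\to c$. This is exactly where the analysis departs from $H>1/2$ in Lemma~\ref{quadratic variation big}: there $1-2H<0$, the fractional part dominates the low-frequency regime, and the same scheme returns the decaying power $T^{1-2H}$ instead of a constant. For the second assertion I would extract the first correction term in the singular expansion of the transform, which is a decaying power of $T$; since $\frac{d}{dT}\log\frac{d}{dT}\langle M\rangle_T=2\,g'(T,T)/g(T,T)$ and the limit $c$ is nonzero, the logarithmic derivative decays at least as fast as $T^{-1}$, so its square decays at least as fast as $T^{-2}$, which is the stated estimate.

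The main obstacle is the singularity analysis itself: justifying the Wiener--Hopf factorization of $f$, locating the dominant singularity of the transform, and pinning down both the nonzero limiting constant and the order of the first correction term with enough uniformity to survive the differentiation needed for the second estimate. This is the genuine technical core; since the kernel and its transform coincide with those treated for this model in \cite{CK18} (following \cite{CCK}), in practice I would invoke and adapt that computation, simply following the $H<1/2$ branch of the factorization rather than rederiving it from scratch.
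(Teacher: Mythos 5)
The first thing to note is that the paper contains no proof of this lemma at all: both Lemma~\ref{quadratic variation big} and Lemma~\ref{quadratic smaller} are quoted in the Appendix as results imported from \cite{CK18}, so the only possible benchmark is the argument in that reference. Your overall scheme --- reduce everything to the diagonal value $g(T,T)$ via the density form of \eqref{eq: innovation} (and its $H<1/2$ analogue from \cite{CCK}), pass to transforms of the kernel equation, and read the large-$T$ behaviour off the low-frequency behaviour of the mixed spectral density $1+c_H|\lambda|^{1-2H}$, with the white-noise part dominating as $\lambda\to 0$ precisely because $1-2H>0$ --- is indeed the mechanism behind the cited result, and it correctly explains the dichotomy with Lemma~\ref{quadratic variation big}. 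Your closing fallback of invoking \cite{CK18} for the technical core puts your write-up on the same footing as the paper itself, which does exactly that and nothing more.

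Taken as a proof, however, your treatment of the second assertion has a genuine gap. You infer that since $g(T,T)\to c>0$, ``the logarithmic derivative decays at least as fast as $T^{-1}$.'' This is a non sequitur: pointwise convergence of $g(T,T)$ gives no control whatsoever on its derivative --- a function can converge to a positive constant while its derivative stays of order one --- so bounding $\frac{d}{dT}\log\frac{d}{dT}\langle M\rangle_T = 2\,\frac{d}{dT}g(T,T)\big/ g(T,T)$ requires a separate, differentiated asymptotic expansion of $g(T,T)$, and Tauberian conclusions about a transform do not automatically survive differentiation. This uniformity issue is exactly the delicate part of \cite{CK18}; you name it as the obstacle but then bypass it rather than resolve it. There is also a mismatch in the form of the conclusion: even granting a correction term of order $T^{-\gamma}$, your argument would deliver only an upper bound of order $T^{-2\gamma-2}$ for the squared logarithmic derivative, whereas the lemma (under the paper's convention that $\sim$ denotes asymptotic equivalence) asserts the exact order $T^{-2}$. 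An upper bound of this kind would in fact suffice for every application of the lemma in this paper, but it is not what the statement claims, so as written the second assertion remains unproved in your proposal.
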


The following Lemma shows the relationship between mfOUp and mfVm.

\begin{lem}\label{lem Qt}
Let $U=(U_t,\, 0\leq t\leq T)$ be a mfOUp with the drift parameter $\beta$:
\begin{equation}\label{eq: mixed OU}
dU_t=-\beta U_tdt+d\xi_t,\,\, t\in [0,T],\,\, U_0=0\,.
\end{equation}

Then, we have 
$$
X_t=\frac{\alpha}{\beta}-\frac{\alpha}{\beta}e^{-\beta t}+U_t,\,\, t\in [0,T]\,.
$$

Moreover, we have the development of $Q_t$ with
\begin{equation}\label{eq: Qt}
Q_t=\frac{\alpha}{\beta}-\frac{\alpha}{\beta}V(t)+Q_t^U\,,
\end{equation}
where
\begin{equation}\label{V and Qu}
V(t)=\frac{d}{d\langle M\rangle_t}\int_0^tg(s,t)e^{-\beta s}ds,\,\,\, Q_t^U=\frac{d}{d\langle M\rangle_t}\int_0^t g(s,t)U_sds\,.
\end{equation}
\end{lem}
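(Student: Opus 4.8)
The plan is to reduce everything to the linearity of the integral against $g(s,t)$ and of the Radon--Nikodym derivative $d/d\langle M\rangle_t$, after first removing the stochastic part of $X$ by comparison with the auxiliary process $U$.

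First I would establish the pathwise decomposition $X_t=\frac{\alpha}{\beta}(1-e^{-\beta t})+U_t$. Since both $X$ (from \eqref{eq: mixed vasicek} with $\gamma=1$) and $U$ (from \eqref{eq: mixed OU}) are driven by the same mfBm $\xi$, I set $Y_t:=X_t-U_t$ and subtract the two equations. The $d\xi_t$ terms cancel exactly, leaving the deterministic linear ODE
$$
dY_t=\left(\alpha-\beta Y_t\right)dt,\qquad Y_0=X_0-U_0=0.
$$
Solving this via the integrating factor $e^{\beta t}$ gives $Y_t=\frac{\alpha}{\beta}\left(1-e^{-\beta t}\right)$, which is the first assertion of the lemma.

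Second, I would substitute this expression into the definition of $Q_t$ in \eqref{eq: change model}. Writing $X_s=\frac{\alpha}{\beta}-\frac{\alpha}{\beta}e^{-\beta s}+U_s$ and using linearity of $\int_0^t g(s,t)(\cdot)\,ds$ yields
$$
\int_0^t g(s,t)X_s\,ds=\frac{\alpha}{\beta}\int_0^t g(s,t)\,ds-\frac{\alpha}{\beta}\int_0^t g(s,t)e^{-\beta s}\,ds+\int_0^t g(s,t)U_s\,ds.
$$
By the canonical representation recalled in Section~\ref{preliminary}, the first integral is precisely $\langle M\rangle_t$. Applying the operator $d/d\langle M\rangle_t$ term by term, using its linearity together with the trivial identity $\frac{d}{d\langle M\rangle_t}\langle M\rangle_t=1$, then reproduces exactly $\frac{\alpha}{\beta}-\frac{\alpha}{\beta}V(t)+Q_t^U$ with $V(t)$ and $Q_t^U$ as defined in \eqref{V and Qu}, establishing \eqref{eq: Qt}.

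The computation is essentially mechanical, so I do not expect a genuine obstacle. The only point demanding a little care is the action of $d/d\langle M\rangle_t$ on the constant multiple of $\langle M\rangle_t$: one must recognize that the Radon--Nikodym derivative of $\frac{\alpha}{\beta}\langle M\rangle_t$ with respect to the measure $d\langle M\rangle_t$ is the constant $\frac{\alpha}{\beta}$, and not some expression still involving the kernel $g(s,t)$. Once the identity $\langle M\rangle_t=\int_0^t g(s,t)\,ds$ is invoked this is immediate, and the remaining two terms match the definitions of $V$ and $Q^U$ verbatim, completing the proof.
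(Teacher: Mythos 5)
Your proof is correct, and it follows what is clearly the intended argument: the paper states this lemma without any proof in the Appendix, treating it as routine. Your two steps --- cancelling the common $d\xi_t$ term to get the deterministic ODE $dY_t=(\alpha-\beta Y_t)\,dt$ for $Y_t=X_t-U_t$, and then using linearity of $\int_0^t g(s,t)(\cdot)\,ds$ and of $d/d\langle M\rangle_t$ together with the identity $\langle M\rangle_t=\int_0^t g(s,t)\,ds$ from Section~\ref{preliminary} --- supply exactly the omitted verification, including the one point worth flagging, namely that the derivative of $\frac{\alpha}{\beta}\langle M\rangle_t$ with respect to $d\langle M\rangle_t$ is the constant $\frac{\alpha}{\beta}$.
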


Next, we present some limit results.

\begin{lem}\label{limit V}
For $H\in (0,1)$ and $H\neq 1/2$, as $T\rightarrow \infty$, we have
$$
\int_0^TV(t)d\langle M\rangle_t \sim Const. \,,
$$
for some constant $C$.
\end{lem}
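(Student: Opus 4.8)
The plan is to reduce the statement to the asymptotics of a single deterministic integral and then pass to the limit by dominated convergence. Write $\psi(t)=\int_0^t g(s,t)e^{-\beta s}\,ds$, so that by the very definition of $V(t)$ in \eqref{V and Qu} one has $V(t)=d\psi(t)/d\langle M\rangle_t$, i.e. $V(t)\,d\langle M\rangle_t=\psi'(t)\,dt$. Hence the fundamental theorem of calculus gives the exact identity
\begin{equation*}
\int_0^T V(t)\,d\langle M\rangle_t=\psi(T)-\psi(0)=\int_0^T g(s,T)e^{-\beta s}\,ds,
\end{equation*}
and the whole problem collapses to showing that $\int_0^T g(s,T)e^{-\beta s}\,ds$ tends to a finite nonzero constant as $T\to\infty$. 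This is the same bookkeeping that already produced the constant term $\tfrac{\alpha}{\beta}$ from $\int_0^t g(s,t)\,ds=\langle M\rangle_t$ in Lemma \ref{lem Qt}, so no new identity is required here.

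First I would record that the weight $e^{-\beta s}$, with $\beta>0$, concentrates the mass of the integral near $s=0$ and renders the right endpoint $s=T$ negligible: whatever integrable singularity $g(\cdot,T)$ develops at $s=T$ (mild for $H>1/2$, stronger for $H<1/2$) is multiplied by a factor of order $e^{-\beta T}$ and disappears in the limit. For the bulk of the integral I would invoke the pointwise convergence of the finite-interval kernel to a half-line kernel, $g(s,T)\to\bar g(s)$ as $T\to\infty$ for each fixed $s$, where $\bar g$ solves the corresponding Wiener--Hopf equation on $[0,\infty)$ (for $H>1/2$, $\bar g(s)+H(2H-1)\int_0^\infty \bar g(r)|r-s|^{2H-2}\,dr=1$). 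Since $\int_0^T g(s,T)\,ds=\langle M\rangle_T\sim T$ by \eqref{eq: joint 1}, the bulk value of $g$ is of order one, i.e. $\bar g(s)\to 1$ as $s\to\infty$, which guarantees $\int_0^\infty \bar g(s)e^{-\beta s}\,ds<\infty$. Combining a uniform-in-$T$ dominating bound on $|g(s,T)|e^{-\beta s}$ with this pointwise limit, dominated convergence yields
\begin{equation*}
\int_0^T g(s,T)e^{-\beta s}\,ds\xrightarrow[T\to\infty]{}\int_0^\infty \bar g(s)e^{-\beta s}\,ds=:C,
\end{equation*}
which is the asserted constant; the argument is uniform across the two regimes $H>1/2$ and $H<1/2$ apart from the form of the limiting kernel.

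The main obstacle is the domination step: producing a single $T$-independent envelope $h(s)$ with $|g(s,T)|\le h(s)$ and $\int_0^\infty h(s)e^{-\beta s}\,ds<\infty$, valid on all of $[0,T]$ and for both ranges of $H$. This is precisely where the fine asymptotic estimates of the kernel $g(s,t)$ from \cite{CCK,CK18} and their refinements in \cite{CK18b} are needed, both for the near-zero behaviour of $g(\cdot,T)$ and for the endpoint layer near $s=T$. The case $H<1/2$ is the delicate one, since there the kernel is governed by the more singular integro-differential equation rather than by the Wiener--Hopf form, and one must check that the endpoint singularity is integrable against $e^{-\beta s}$ (which it is, thanks to the $e^{-\beta T}$ suppression). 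Should a clean envelope prove awkward to isolate, an alternative is to substitute the integral equation $g(s,T)=1-H(2H-1)\int_0^T g(r,T)|r-s|^{2H-2}\,dr$ directly into $\int_0^T g(s,T)e^{-\beta s}\,ds$ and bootstrap a bound on the resulting self-referential expression, but the dominated-convergence route is cleaner to state.
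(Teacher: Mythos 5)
Your opening reduction is exactly the step the paper's (very terse) proof rests on: since $V(t)\,d\langle M\rangle_t = d\left(\int_0^t g(s,t)e^{-\beta s}\,ds\right)$ by the definition \eqref{V and Qu}, one gets
$$
\int_0^T V(t)\,d\langle M\rangle_t=\int_0^T g(s,T)e^{-\beta s}\,ds,
$$
which is the same identity the paper applies explicitly to $Q_t^U$ in Lemma \ref{lim Qt U Vt smaller}. The gap is in what you do next. You present the construction of a $T$-independent envelope $h(s)\ge |g(s,T)|$ as the ``main obstacle,'' defer it to unspecified fine kernel asymptotics from \cite{CCK,CK18,CK18b}, and never actually produce it. But no fine estimates are needed: the paper's entire proof is the observation that $0\le g(s,t)\le 1$ for all $H$ (for $H>1/2$, granting $g\ge 0$, this is immediate from the Wiener--Hopf equation, which gives $g(s,t)=1-H(2H-1)\int_0^t g(r,t)|r-s|^{2H-2}dr\le 1$). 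Hence the envelope is simply $h\equiv 1$ and
$$
0\le \int_0^T g(s,T)e^{-\beta s}\,ds\le \int_0^\infty e^{-\beta s}\,ds=\frac{1}{\beta},
$$
so the quantity is $O(1)$ uniformly in $T$. That uniform boundedness is the substance of the lemma and is what the subsequent proofs actually use (each later application divides $\int_0^T V(t)\,d\langle M\rangle_t$, or its square, by $T$, $\langle M\rangle_T$ or $T\langle M\rangle_T$, and needs only that it stays bounded). Leaving the domination step open is therefore a genuine gap in your write-up, even though the missing fact is elementary.

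A second, factual problem: to control the limiting kernel you claim $\int_0^T g(s,T)\,ds=\langle M\rangle_T\sim T$, citing \eqref{eq: joint 1}, and deduce that the bulk value of $g$ is of order one, i.e.\ $\bar g(s)\to 1$. That growth rate holds only for $H<1/2$ (Lemma \ref{quadratic smaller}); for $H>1/2$, Lemma \ref{quadratic variation big} gives $\frac{d}{dT}\langle M\rangle_T\sim T^{1-2H}$, hence $\langle M\rangle_T\sim c\,T^{2-2H}=o(T)$, and the kernel in the bulk decays rather than tending to $1$. This does not endanger the finiteness you need (a smaller kernel only helps), but it shows that the half-line Wiener--Hopf picture you invoke is regime-dependent and, like the pointwise convergence $g(\cdot,T)\to\bar g$, is assumed rather than proved. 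Note also that you aim at a strictly stronger conclusion than the paper establishes or uses --- existence of an actual nonzero limit --- and it is precisely this stronger goal that forces you into the unproved convergence and domination steps; restricted to what is needed, the proof consists of the two displays above.
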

\begin{proof}
The result is clear when from the definition we know $0\leq g(s,t)\leq 1$. 
\end{proof}

\begin{lem}\label{limit Q U}
For $H>1/2$, as $T\rightarrow \infty$, we have
$$
\frac{1}{\sqrt{T\langle M\rangle_T}}\int_0^TQ_t^Ud\langle M\rangle_t\xrightarrow{\mathbf{P}}0\,.
$$
\end{lem}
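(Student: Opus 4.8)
The plan is to reduce the statement to an elementary second-moment estimate by exploiting the fact that the $\langle M\rangle$-derivative in the definition of $Q_t^U$ integrates back exactly. Writing $\Psi(t)=\int_0^t g(s,t)U_s\,ds$, the defining relation \eqref{V and Qu} reads $Q_t^U=\frac{d}{d\langle M\rangle_t}\Psi(t)$, i.e. $Q_t^U\,d\langle M\rangle_t=d\Psi(t)$; since $\Psi(0)=0$ this telescopes to
\begin{equation}\label{eq:telescope}
\int_0^T Q_t^U\,d\langle M\rangle_t=\int_0^T g(s,T)\,U_s\,ds .
\end{equation}
So it is enough to show that $\frac{1}{\sqrt{T\langle M\rangle_T}}\int_0^T g(s,T)U_s\,ds\xrightarrow{\mathbf{P}}0$, and I would establish the stronger statement of convergence in $L^2$.

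First I would compute the second moment by Fubini and bound the covariance kernel,
\begin{equation*}
\mathbf{E}\left[\left(\int_0^T g(s,T)U_s\,ds\right)^2\right]=\int_0^T\!\!\int_0^T g(s,T)g(r,T)\,\mathbf{E}[U_sU_r]\,ds\,dr\le K\left(\int_0^T g(s,T)\,ds\right)^2,
\end{equation*}
which uses two inputs. The first is $0\le g(s,t)\le1$ (already invoked in the proof of Lemma \ref{limit V}) together with the identity $\int_0^T g(s,T)\,ds=\langle M\rangle_T$ from Section \ref{preliminary}; nonnegativity of $g$ is what lets me pull $|\mathbf{E}[U_sU_r]|$ out of the double integral. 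The second is a uniform variance bound $K:=\sup_{t\ge0}\mathbf{E}[U_t^2]<\infty$ for the mean-reverting mfOUp $U$ of Lemma \ref{lem Qt} (its Brownian part has variance $\le 1/(2\beta)$ and its fractional part has a bounded, convergent variance), whence $|\mathbf{E}[U_sU_r]|\le K$ by Cauchy--Schwarz. This yields $\mathbf{E}[(\int_0^T g(s,T)U_s\,ds)^2]\le K\langle M\rangle_T^2$.

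I would then normalise and invoke the asymptotics of $\langle M\rangle_T$. Integrating $\frac{d}{dT}\langle M\rangle_T\sim T^{1-2H}$ from Lemma \ref{quadratic variation big} gives $\langle M\rangle_T\sim C\,T^{2-2H}$ for $H>1/2$, so
\begin{equation*}
\mathbf{E}\left[\left(\frac{1}{\sqrt{T\langle M\rangle_T}}\int_0^T g(s,T)U_s\,ds\right)^2\right]\le\frac{K\langle M\rangle_T^2}{T\langle M\rangle_T}=\frac{K\langle M\rangle_T}{T}\sim KC\,T^{1-2H}\longrightarrow0,
\end{equation*}
because $1-2H<0$. Convergence in $L^2$ gives convergence in probability, and combined with \eqref{eq:telescope} this is exactly the claim.

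The step that really carries the argument is the kernel-mass bound. The naive estimate $g\le1$ only gives $\int_0^T g(s,T)\,ds\le T$, which is hopeless: it would leave $T^2$ in the numerator and the ratio would diverge as $H\uparrow1$. What rescues the proof is that the genuine $L^1$-mass of $g(\cdot,T)$ equals $\langle M\rangle_T\sim T^{2-2H}=o(T)$, and it is precisely the surplus decay between $T^{2-2H}$ and $T$ that forces the exponent $1-2H$ to be negative. A secondary point to keep in mind is that, although the increments of $U$ are long-range dependent for $H>1/2$ so that $\mathbf{E}[U_sU_r]$ is not integrable in $|s-r|$, the estimate never requires integrability of the covariance --- only the uniform bound $\sup_t\mathbf{E}[U_t^2]<\infty$ furnished by mean reversion --- so no delicate long-memory cancellation is needed.
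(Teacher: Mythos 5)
Your proof is correct, and it shares the paper's overall skeleton---the telescoping identity $\int_0^T Q_t^U\,d\langle M\rangle_t=\int_0^T g(s,T)U_s\,ds$, a second-moment computation via Fubini, and the implication from $L^2$-convergence to convergence in probability---but the estimate that carries the argument is genuinely different. The paper does not use a uniform covariance bound: it splits $\mathbf{E}(U_sU_t)$ into its Brownian and fractional components, bounds the fractional one by $C_{H,\beta}H(2H-1)|t-s|^{2H-2}$, and then absorbs the long-memory kernel exactly through the Wiener--Hopf equation, $H(2H-1)\int_0^T g(r,T)|r-s|^{2H-2}\,dr=1-g(s,T)$, arriving at a bound of the form $\mathrm{const}+C_{H,\beta}\langle M\rangle_T$. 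Dividing by $T\langle M\rangle_T$ then finishes the proof using nothing beyond $T\langle M\rangle_T\to\infty$; in particular the paper never needs the growth rate of $\langle M\rangle_T$. You replace this structural input by two soft facts---$\sup_{t\ge 0}\mathbf{E}[U_t^2]<\infty$ from mean reversion, and $\int_0^T g(s,T)\,ds=\langle M\rangle_T$ together with $g\ge 0$---at the price of the cruder bound $K\langle M\rangle_T^2$, and you must then invoke Lemma \ref{quadratic variation big} to get $\langle M\rangle_T\sim C\,T^{2-2H}=o(T)$, which is exactly what makes $K\langle M\rangle_T/T\to 0$ for $H>1/2$; your own remark that the honest $L^1$-mass of $g(\cdot,T)$, not the bound $g\le 1$, is the crux is accurate. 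Both routes are valid; yours is more elementary (no integral equation, no covariance decay of the fractional Ornstein--Uhlenbeck component), but the paper's sharper order-$\langle M\rangle_T$ bound has a payoff downstream: in the proof of Lemma \ref{limit Qt}, the bound from this lemma is re-used with Borel--Cantelli to obtain the almost sure convergence \eqref{eq: lim Qu almost} of $\frac{1}{T}\int_0^T Q_t^U\,d\langle M\rangle_t$ for all $H>1/2$ (Chebyshev gives a summable tail of order $T^{-2H}$), whereas your bound yields only $T^{2-4H}$, which is summable only for $H>3/4$. So your argument fully settles the lemma as stated, but it is not a drop-in replacement for the paper's estimate at every place the latter is cited.
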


\begin{proof}
A standard calculation yields
\begin{eqnarray*}
\mathbf{E}\left(\int_0^T Q_t^U d\langle M\rangle_t \right)^2&=&\mathbf{E}\left(\int_0^T g(t,T) U_t dt   \right)^2 \\
&=& \int_0^T \int_0^T g(s,T)g(t,T)\mathbf{E} (U_s U_t)dsdt\\
&\leq& \int_0^T e^{-2\beta(T-t)}dt+C_{H,\beta}H(2H-1)\int_0^T\int_0^T g(t,T)g(s,T)|t-s|^{2H-2}dsdt\\
&=&\int_0^T e^{-2\beta(T-t)}dt+C_{H,\beta}\int_0^T(1-g(s,T))g(s,T)ds\\
&=&\frac{1}{2\beta}\left(1-e^{-2\beta T}\right)+2C_{H,\beta}\langle M\rangle_T\,,
\end{eqnarray*}
which implies the desired result.
\end{proof}

\begin{lem}\label{limit Qt}
Let $H>1/2$, as $T\rightarrow \infty$, we have 
\begin{equation}\label{eq: convergence in prob}
\frac{1}{T}\int_0^TQ_t^2d\langle M\rangle_t \xrightarrow{\mathbf{P}}\frac{1}{2\beta}\,.
\end{equation}

Moreover, from the martingale convergence theorem, we have
\begin{equation}\label{eq: convergence in distribution}
\frac{1}{\sqrt{T}}\int_0^T Q_tdM_t\xrightarrow{d}\mathcal{N}\left(0,\frac{1}{2\beta}\right)\,.
\end{equation}
\end{lem}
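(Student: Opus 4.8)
The plan is to reduce the statement to the pure mixed fractional Ornstein--Uhlenbeck quantities, for which the limit $\frac{1}{2\beta}$ is already recorded in \cite{CK18}, and to show that the long-term mean $\alpha$ contributes nothing in the regime $H>1/2$. First I would insert the decomposition $Q_t=\frac{\alpha}{\beta}\bigl(1-V(t)\bigr)+Q_t^U$ of Lemma \ref{lem Qt} into $Q_t^2$ and expand, so that
\begin{align*}
\frac{1}{T}\int_0^T Q_t^2\,d\langle M\rangle_t
&=\Bigl(\frac{\alpha}{\beta}\Bigr)^2\frac{1}{T}\int_0^T\bigl(1-V(t)\bigr)^2 d\langle M\rangle_t
+\frac{1}{T}\int_0^T (Q_t^U)^2\,d\langle M\rangle_t\\
&\quad+\frac{2\alpha}{\beta}\frac{1}{T}\int_0^T\bigl(1-V(t)\bigr)Q_t^U\,d\langle M\rangle_t.
\end{align*}

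The second (purely Ornstein--Uhlenbeck) term converges in probability to $\frac{1}{2\beta}$ by \cite{CK18}, so it remains only to kill the first and third. Both are governed by the deterministic factor $\int_0^T\bigl(1-V(t)\bigr)^2\,d\langle M\rangle_t$. Using $0\le g(s,t)\le 1$ (the boundedness observation behind Lemma \ref{limit V}), the function $V$ is bounded, whence $\int_0^T\bigl(1-V(t)\bigr)^2\,d\langle M\rangle_t\le C\,\langle M\rangle_T$; by Lemma \ref{quadratic variation big} one has $\langle M\rangle_T\sim c\,T^{2-2H}=o(T)$ for $H>1/2$, so $\frac{1}{T}\int_0^T\bigl(1-V(t)\bigr)^2\,d\langle M\rangle_t\to 0$. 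This disposes of the first term directly, since it is deterministic, and of the cross term after a Cauchy--Schwarz bound against $\frac{1}{T}\int_0^T (Q_t^U)^2\,d\langle M\rangle_t$, which is bounded in probability. Collecting the three pieces gives \eqref{eq: convergence in prob}.

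For \eqref{eq: convergence in distribution} I would set $N_t=\int_0^t Q_s\,dM_s$, a continuous $\mathcal{F}^{\xi}$-martingale with $\langle N\rangle_t=\int_0^t Q_s^2\,d\langle M\rangle_s$. The first part yields $\frac{1}{T}\langle N\rangle_T\xrightarrow{\mathbf{P}}\frac{1}{2\beta}$, a \emph{deterministic} limit, so the central limit theorem for continuous martingales of \cite{HH80} applies and delivers $\frac{1}{\sqrt{T}}N_T\xrightarrow{d}\mathcal{N}\bigl(0,\frac{1}{2\beta}\bigr)$; because the limiting bracket is constant rather than random, the limit is a genuine Gaussian rather than a mixture.

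The main obstacle is the probability limit \eqref{eq: convergence in prob}, and within it the verification that the $\alpha$-dependent terms are asymptotically negligible: this is precisely where the hypothesis $H>1/2$ enters, through $\langle M\rangle_T=o(T)$ from Lemma \ref{quadratic variation big}. One must also confirm that the boundedness (and hence $\langle M\rangle$-integrability) of the deterministic function $V$ is genuinely available, rather than merely the finiteness of $\int_0^T V\,d\langle M\rangle_t$ recorded in Lemma \ref{limit V}; once this is secured, the cross term is immediate from Cauchy--Schwarz, and the distributional statement follows as a corollary of the computation via the martingale CLT.
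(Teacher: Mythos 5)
Your overall architecture coincides with the paper's: the same decomposition $Q_t=\frac{\alpha}{\beta}-\frac{\alpha}{\beta}V(t)+Q_t^U$ from Lemma \ref{lem Qt}, the limit $\frac{1}{T}\int_0^T (Q_t^U)^2\,d\langle M\rangle_t\xrightarrow{\mathbf{P}}\frac{1}{2\beta}$ imported from \cite{CK18}, Cauchy--Schwarz for the cross term, and the martingale central limit theorem of \cite{HH80} for \eqref{eq: convergence in distribution} (your observation that the limiting bracket is deterministic, so the limit is genuinely Gaussian, matches the paper's use of the theorem). The one place where you diverge is exactly the step you flagged and then dismissed as easy, and there the proof has a genuine gap.

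The claim that $V$ is bounded does not follow from $0\le g(s,t)\le 1$. By definition and by \eqref{eq: innovation},
\[
V(t)=\frac{d}{d\langle M\rangle_t}\int_0^t g(s,t)e^{-\beta s}ds
=\frac{1}{g^2(t,t)}\Bigl(g(t,t)e^{-\beta t}+\int_0^t \dot g(s,t)e^{-\beta s}ds\Bigr),
\qquad \dot g(s,t)=\tfrac{\partial}{\partial t}g(s,t),
\]
so two objects intervene about which the bound on $g$ says nothing: the $t$-derivative $\dot g$ of the kernel, and the division by $g^2(t,t)=\frac{d}{dt}\langle M\rangle_t\sim t^{1-2H}\to 0$ (Lemma \ref{quadratic variation big}) when $H>1/2$. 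Boundedness of $V$ is therefore a nontrivial cancellation statement: the numerator must decay at least as fast as $t^{1-2H}$. This is precisely where the paper invests its technical effort; it never asserts that $V$ is bounded, but instead proves $\frac{1}{T}\int_0^T V^2(t)\,dt\to 0$ through the representation \eqref{eq: dev Vt 2}, the Cauchy--Schwarz bound \eqref{eq: dot g} on $\int_0^t \dot g(s,t)e^{-\beta s}ds$, and the integro-differential equation satisfied by $\dot g$. Note also that Lemma \ref{limit V} cannot substitute for this: it controls $\int_0^T V(t)\,d\langle M\rangle_t=\int_0^T g(s,T)e^{-\beta s}ds\le 1/\beta$, where $0\le g\le 1$ genuinely suffices, but the measure $d\langle M\rangle_t$ is degenerating for $H>1/2$, so no pointwise control of $V$ can be extracted from it. Your handling of the remaining pieces --- absorbing both $\frac{1}{T}\int_0^T Q_t^U\,d\langle M\rangle_t$ and the cross term into a single Cauchy--Schwarz estimate against $\frac{1}{T}\int_0^T(1-V(t))^2 d\langle M\rangle_t$, instead of the paper's separate Lemma \ref{limit Q U} plus Borel--Cantelli argument --- would be fine, and indeed slightly cleaner, but it is conditional on that same missing estimate, so the proof as written is incomplete at its only delicate point.
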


\begin{proof}
From the definition of $Q_t$, we can write $Q_t^2$ as
\begin{eqnarray}\label{Eq: Q t 2}
Q_t^2&=&\left(\frac{\alpha}{\beta}-\frac{\alpha}{\beta}V(t)+Q_t^U\right)^2\notag\\
&=&\left(\frac{\alpha}{\beta}\right)^2+\left(\frac{\alpha}
{\beta}\right)^2V^2(t)+(Q_t^U)^2-2\left(\frac{\alpha}{\beta}\right)^2
V(t)+\frac{2\alpha}{\beta}Q_t^U-\frac{2\alpha}{\beta}V(t)Q_t^U\,.
\end{eqnarray}

Using \eqref{Eq: Q t 2}, can write our target quantity as
\begin{eqnarray}\label{Eq: 30}
\frac{1}{T}\int_0^TQ_t^2d\langle M\rangle_t&=&\frac{1}{T}\int_0^T \left(\frac{\alpha}{\beta}\right)^2d\langle M\rangle_t+
\frac{1}{T}\int_0^T\left(\frac{\alpha}{\beta}\right)^2V^2\left(t\right) d\langle M\rangle_t+\frac{1}{T}\int_0^T \left(Q_t^U\right)^2 d\langle M\rangle_t\notag\\
&&-2\frac{1}{T}\int_0^T \left(\frac{\alpha}{\beta}\right)^2V(t) d\langle M\rangle_t+\frac{1}{T}\int_0^T \left(\frac{2\alpha}{\beta}Q_t^U\right) d\langle M\rangle_t\notag\\
&&-\frac{1}{T}\int_0^T\left(\frac{2\alpha}{\beta}V(t)Q_t^U\right) d\langle M\rangle_t\,.
\end{eqnarray}

We consider the above six integrals separately. First, as $T\rightarrow\infty$, it is easy to see that
\begin{equation}\label{eq convergence constant}
\frac{1}{T}\int_0^T\left(\frac{\alpha}{\beta}\right)^2d\langle M\rangle_t=\frac{\langle M\rangle_T}{T}\xrightarrow{a.s.} 0\,.
\end{equation} 

Now, we deal with the second term in \eqref{Eq: 30}. A standard calculation implies
\begin{equation}\label{eq: dev Vt 2}
V^2(t)=\frac{d}{d\langle M\rangle_t}\int_0^t g(s,t)e^{-\beta s}ds
=\frac{1}{g^2(t,t)}\left(g(t,t)e^{-\beta t}+\int_0^t \dot{g}(s,t)e^{-\beta s}ds  \right)\,,
\end{equation}
where $\dot{g}(s,t)=\frac{\partial}{\partial t}g(s,t)$. With Cauchy-Schwarz inequality, we have
\begin{equation}\label{eq: dot g}
\int_0^t \dot{g}(s,t)e^{-\beta s}ds\leq \sqrt{\int_0^t \dot{g}^2(s,t)ds\int_0^t e^{-2\beta s}ds} \leq C\sqrt{\int_0^t s^{4H-4}ds \int_0^t e^{-2\beta s}ds}\,,
\end{equation}
where $C$ is a constant and the last inequality comes from the fact that
$$
\dot{g}(s,t)+H(2H-1)\int_0^t \dot{g}(r,t)|r-s|^{2H-2}=-H(2H-1)g(t,t)|s-t|^{2H-2},\,\, s\in (0,t),\, t>0.
$$

Combining \eqref{eq: dev Vt 2}, \eqref{eq: dot g} and Lemma \ref{quadratic variation big}, we can easily obtain
\begin{equation}\label{eq: convergence Vt 2}
\lim_{T\rightarrow \infty}\frac{1}{T}\int_0^T V^2(t)dt=0\,.
\end{equation}

From \cite{CK18}, as $T\rightarrow\infty$, we have
\begin{equation}\label{eq: convergence proba Qu}
\frac{1}{T}\int_0^T (Q_t^U)^2d\langle M\rangle_t \xrightarrow{\mathbf{P}}\frac{1}{2\beta}\,.
\end{equation}
 
Moreover, from Lemma \ref{limit V}, as $T\rightarrow\infty$, we have the following convergence
\begin{equation}\label{eq: convergence Vt as}
\frac{1}{T}\int_0^T V\left(t\right)d\langle M\rangle_t \xrightarrow{\mathbf{P}} 0\,.
\end{equation}

Next, from the proof of Lemma \ref{limit Q U} and Borel-Cantelli theorem, as $T\rightarrow\infty$, we obtain
\begin{equation}\label{eq: lim Qu almost}
\frac{1}{T}\int_0^T Q_t^U d\langle M\rangle_t\xrightarrow{a.s.} 0\,.
\end{equation}

With Cauchy-Schwarz inequality, \eqref{eq: convergence proba Qu} and \eqref{eq: convergence Vt 2}, we obtain
\begin{equation}\label{eq: convergece product}
\left|\frac{1}{T}\int_0^TV(t)Q_t^Ud\langle M\rangle_t\right|\leq \sqrt{\frac{1}{T}\int_0^T V^2(t)d\langle M\rangle_t\frac{1}{T}\int_0^T(Q_t^U)^2d\langle M\rangle_t}\xrightarrow{\mathbf{P}}0.
\end{equation}

Finally, the convergence in probability of \eqref{eq: convergence in prob} can be obtained by \eqref{eq convergence constant}, \eqref{eq: convergence Vt 2}, \eqref{eq: convergence proba Qu}, \eqref{eq: convergence Vt as}, \eqref{eq: lim Qu almost} and \eqref{eq: convergece product}.

For the convergence of \eqref{eq: convergence in distribution}, since the process $\int_0^t Q_s d\langle M\rangle, t\in [0,T]$ is a martingale and its quadratic variance is $\int_0^t Q_s^2 d\langle M\rangle_s,\, t\in [0,T]$, we can obtain \eqref{eq: convergence in distribution} by the martingale convergence theorem.
\end{proof}

The following are the results for $H<1/2$. When Lemma \ref{limit V} is also available for all $H\in (0,1)$, then 
\begin{lem}\label{V asymptotic smaller}
For $H<1/2$, we have 
$$
V(t)\sim O(1/t),\,\, t\rightarrow \infty.
$$
\end{lem}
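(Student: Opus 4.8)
The plan is to begin from the explicit representation of $V(t)$ obtained by differentiating under the integral sign, exactly as in the computation leading to \eqref{eq: dev Vt 2}. Writing $d\langle M\rangle_t=\big(\tfrac{d}{dt}\langle M\rangle_t\big)\,dt$ and applying the Leibniz rule to $\int_0^t g(s,t)e^{-\beta s}\,ds$ gives
$$
V(t)=\frac{1}{\frac{d}{dt}\langle M\rangle_t}\left(g(t,t)e^{-\beta t}+\int_0^t \dot g(s,t)e^{-\beta s}\,ds\right),\qquad \dot g(s,t)=\tfrac{\partial}{\partial t}g(s,t).
$$
By Lemma \ref{quadratic smaller}, for $H<1/2$ the quantity $\frac{d}{dt}\langle M\rangle_t$ tends to a positive constant, so the prefactor is $O(1)$ and may be dropped for the order estimate. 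The boundary term $g(t,t)e^{-\beta t}$ is exponentially small (the factor $e^{-\beta t}$ dominates the at most polynomial growth of $g(t,t)$), hence negligible compared with $1/t$. Everything therefore reduces to showing that $\int_0^t \dot g(s,t)e^{-\beta s}\,ds=O(1/t)$ as $t\to\infty$.

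For this key estimate I would exploit the exponential weight to localize away from the diagonal. Splitting the integral at $s=t/2$, on $[t/2,t]$ --- the region carrying the singular point $s=t$ of $\dot g$ --- the weight is at most $e^{-\beta t/2}$, so that piece is bounded by $e^{-\beta t/2}\int_0^t|\dot g(s,t)|\,ds$ and is exponentially small. This is precisely the feature that replaces the Cauchy--Schwarz bound of \eqref{eq: dot g}, which is unavailable for $H<1/2$ since then the forcing singularity $|s-t|^{2H-2}$ is no longer square integrable: the exponential weight keeps the bulk of the mass away from $s=t$. On the remaining interval $[0,t/2]$ one uses the integro-differential equation satisfied by $\dot g(\cdot,t)$, whose forcing term is $-H(2H-1)g(t,t)|s-t|^{2H-2}$; for $s\le t/2$ this forcing is $O(t^{2H-2})$ uniformly in $s$, and inverting the (bounded) resolvent of the equation would yield $|\dot g(s,t)|\le C\,t^{2H-2}$ uniformly on $[0,t/2]$. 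Integrating against $e^{-\beta s}$ then gives $\int_0^{t/2}\dot g(s,t)e^{-\beta s}\,ds=O(t^{2H-2})$, which for $H<1/2$ is even smaller than the asserted $O(1/t)$.

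Collecting the three contributions yields $V(t)=O(1/t)$, as claimed. The step I expect to be the main obstacle is the uniform resolvent bound $|\dot g(s,t)|\le C\,t^{2H-2}$ on $[0,t/2]$: since $\dot g$ is defined only implicitly through a singular integral equation on the growing interval $[0,t]$, establishing this rate requires controlling the inverse of the associated operator rather than a one-line estimate. As a consistency check, the bound is compatible with the downstream use of this lemma in the proof of Theorem \ref{th: convergence normal smaller}, where one needs $\frac1T\int_0^T V^2(t)\,d\langle M\rangle_t\to0$: indeed $V(t)=O(1/t)$ gives $V^2(t)=O(1/t^2)$, whence $\int_0^T V^2(t)\,d\langle M\rangle_t=O(1)$ by Lemma \ref{quadratic smaller}, which is certainly $o(T)$.
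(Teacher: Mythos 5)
Your reduction---write $V(t)$ by the Leibniz rule, absorb the prefactor using Lemma \ref{quadratic smaller}, discard the exponentially small boundary term---is sound, but everything then hinges on two properties of $\dot g(\cdot,t)$ that you do not prove, and the route you sketch toward them is closed in precisely the regime of the lemma. The equation you invoke for $\dot g$, with kernel $H(2H-1)|r-s|^{2H-2}$ and forcing $-H(2H-1)g(t,t)|s-t|^{2H-2}$, is the one the paper states (inside the proof of Lemma \ref{limit Qt}, leading to \eqref{eq: dot g}) for $H>1/2$ only. For $H<1/2$ one has $2H-2<-1$, so $|r-s|^{2H-2}$ is not locally integrable: the integral operator in that equation is not even defined, and the kernel $g$ is instead characterized by the integro-differential equation $g(s,t)+H\frac{d}{ds}\int_0^t g(r,t)|r-s|^{2H-1}\mathrm{sign}(s-r)\,dr=1$, which carries no bounded-resolvent structure of the type you propose to invert. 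Consequently the uniform bound $|\dot g(s,t)|\le C\,t^{2H-2}$ on $[0,t/2]$---which you yourself flag as the main obstacle---is not a technical refinement left to the reader but the missing core of the argument. The same defect undercuts the piece on $[t/2,t]$: the bound $e^{-\beta t/2}\int_0^t|\dot g(s,t)|\,ds$ is useful only if $\int_0^t|\dot g(s,t)|\,ds$ is finite and grows at most polynomially in $t$, and for $H<1/2$ nothing in the paper (or in your proposal) establishes even finiteness, since the diagonal behaviour of $\dot g$ is exactly what is uncontrolled in this regime.

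For comparison, the paper's own proof never touches $\dot g$: it combines Lemma \ref{limit V}, which says $\int_0^T V(t)\,d\langle M\rangle_t=\int_0^T V(t)\frac{d\langle M\rangle_t}{dt}\,dt$ stays bounded as $T\to\infty$, with Lemma \ref{quadratic smaller}, which says $\frac{d\langle M\rangle_t}{dt}$ tends to a positive constant, to conclude that $V$ is integrable on $[0,\infty)$ and reads the $O(1/t)$ rate off from that. (That final step is itself terse---integrability alone yields a pointwise $O(1/t)$ rate only with some monotonicity or regularity of $V$ in hand---but it entirely avoids the singular-kernel analysis on which your argument founders.) To salvage your computation you would need the $H<1/2$ analogue of \eqref{eq: dot g}, i.e.\ genuine diagonal asymptotics for $g$ and $\dot g$ of the mixed kernel in the rough regime, which is a substantially harder problem than anything the paper actually uses.
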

\begin{proof}
$$
\int_0^T V(t) d\langle M\rangle_t=\int_0^T \left(V(t)\frac{d\langle M\rangle_t}{dt}\right)dt \sim Const.
$$
The result is clear with Lemma \ref{quadratic smaller}
\end{proof}
Now, we deal with the difficulty of the integral of $Q_t^U$:
\begin{lem}\label{lim Qt U Vt smaller}
For $H<1/2$ we have
$$
\frac{1}{T\langle M\rangle_T}\left(\int_0^T Q_t^Ud\langle M\rangle_t\right)^2=\frac{1}{T\langle M\rangle_T}\left(\int_0^T g(s,T)U_sds\right)^2\xrightarrow {a.s.}0
$$
and 
$$
\frac{2}{T}\left(\frac{\alpha}{\beta}\right)\int_0^T V(t)Q_t^Ud\langle M\rangle_t\xrightarrow {a.s.}0
$$
\end{lem}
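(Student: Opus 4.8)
I would prove the two assertions separately; the first is the substantive one. Throughout write $A_T:=\int_0^T Q_t^U\,d\langle M\rangle_t$, and note that by the definition of $Q_t^U$ in \eqref{V and Qu} one has $A_T=\int_0^T g(s,T)U_s\,ds$, exactly the identity recorded in the statement. Since $U$ is a centered Gaussian process, $A_T$ is a centered Gaussian random variable, and by Lemma \ref{quadratic smaller} we have $\langle M\rangle_T\sim c\,T$, so that $T\langle M\rangle_T\asymp T^2$. The target $\frac{1}{T\langle M\rangle_T}A_T^2\xrightarrow{a.s.}0$ is therefore equivalent to $A_T=o(T)$ almost surely. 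I would stress at the outset that the crude bound $A_T^2\le \langle M\rangle_T\int_0^T(Q_t^U)^2\,d\langle M\rangle_t$ coming from Cauchy--Schwarz only yields $\frac{1}{T\langle M\rangle_T}A_T^2\le \frac1T\int_0^T(Q_t^U)^2\,d\langle M\rangle_t\to \frac{1}{2\beta}$, i.e. boundedness; the cancellation present in the unsquared integral must instead be exploited through a second--moment estimate.

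The heart of the proof is the variance bound $\mathbf{E}A_T^2=O(\langle M\rangle_T)=O(T)$. Writing $\mathbf{E}A_T^2=\int_0^T\int_0^T g(s,T)g(r,T)\,\mathbf{E}(U_sU_r)\,ds\,dr$ and using $0\le g\le 1$, I would split $\mathbf{E}(U_sU_r)$ into the Ornstein--Uhlenbeck (Brownian) part, which contributes a term of order $\int_0^T e^{-2\beta(T-t)}\,dt=O(1)$, and the fractional part. This mirrors the computation in the proof of Lemma \ref{limit Q U}, but the Wiener--Hopf identity $g(s,T)+H(2H-1)\int_0^Tg(r,T)|r-s|^{2H-2}dr=1$ used there is available only for $H>1/2$; for $H<1/2$ the kernel solves the integro--differential equation with the sign function and $H(2H-1)<0$, so the fractional contribution must be controlled through the anti--persistent, integrable covariance of the mixed fractional Ornstein--Uhlenbeck process established in \cite{CK18,CCK}. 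This is the main obstacle: producing $\mathbf{E}A_T^2=O(\langle M\rangle_T)$ in the short--range regime, where the clean Wiener--Hopf cancellation is lost.

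Granting the variance bound, I would finish by Gaussian concentration. Because $A_T$ is centered Gaussian with $\mathbf{E}A_T^2\le C\,T$, we get $\mathbf{P}(|A_T|>\varepsilon T)\le 2\exp(-\varepsilon^2 T/(2C))$ for every $\varepsilon>0$; along the integers $T=n$ these probabilities are summable, so Borel--Cantelli gives $A_n/n\to 0$ almost surely. To pass to all $T$, I would control $\sup_{n\le T\le n+1}|A_T|$ by the Borell--TIS inequality, whose tail is governed by the same linear variance bound, so that $\mathbf{P}(\sup_{n\le T\le n+1}|A_T|>\varepsilon n)$ remains summable; this upgrades the convergence to $A_T/T\xrightarrow{a.s.}0$ and hence proves the first assertion.

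For the second assertion I would argue exactly as in \eqref{eq: convergece product}: by Cauchy--Schwarz, $\left|\frac{2}{T}\frac{\alpha}{\beta}\int_0^T V(t)Q_t^U\,d\langle M\rangle_t\right|\le 2\left|\frac{\alpha}{\beta}\right|\left(\frac1T\int_0^T V^2(t)\,d\langle M\rangle_t\right)^{1/2}\left(\frac1T\int_0^T(Q_t^U)^2\,d\langle M\rangle_t\right)^{1/2}$. The second factor converges almost surely to $\frac{1}{2\beta}$ by \cite{CK18} and is in particular a.s. bounded, while the first factor is deterministic and tends to $0$: by Lemma \ref{V asymptotic smaller} one has $V(t)=O(1/t)$ and by Lemma \ref{quadratic smaller} $d\langle M\rangle_t/dt$ is bounded, so $\int_0^T V^2(t)\,d\langle M\rangle_t$ converges to a finite constant and division by $T$ sends it to $0$. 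The product of an a.s.\ bounded factor with a deterministic null sequence tends to $0$ almost surely, which is the claim.
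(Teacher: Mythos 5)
Your route is genuinely different from the paper's, so let me first say what the paper does: it argues pathwise. Writing $\int_0^T Q_t^U\,d\langle M\rangle_t=\int_0^T g(t,T)U_t\,dt$, it bounds $|U_t|$ trajectory-by-trajectory, using H\"older continuity of $\xi$ and a Young/Riemann-sum estimate for $U_t=\int_0^t e^{-\beta(t-s)}d\xi_s$ to get $|U_t|\le M\int_0^t e^{-\beta(t-s)}\,ds^{1/2-\varepsilon}$ with a random constant $M$; division by $\sqrt{T\langle M\rangle_T}\asymp T$ then gives the first limit a.s., and the second limit is declared immediate from $V(t)=O(1/t)$ (Lemma \ref{V asymptotic smaller}), implicitly reusing the same pathwise control of $Q_t^U$. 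Your second-moment/Gaussian-concentration plan is a legitimate and in some ways more robust alternative (a uniform H\"older constant on $[0,\infty)$ is itself delicate). But its load-bearing step, $\mathbf{E}A_T^2=O(T)$, is exactly what you do not prove: you ``grant'' it, call it the main obstacle, and defer it to \cite{CK18} and \cite{CCK}, which do not record the covariance estimate you need. The fact is true and provable along the lines you gesture at: for $H<1/2$ the fractional OU component has covariance decaying like $|s-r|^{2H-2}$, integrable because $2H-2<-1$, while the Brownian OU component has covariance bounded by $\frac{1}{2\beta}e^{-\beta|s-r|}$; with $0\le g\le 1$ both double integrals over $[0,T]^2$ are $O(T)$. (Your claim that the Brownian part contributes $O(1)$ is off --- it is $O(T)$ --- but harmless, since $O(T)$ is all you need.) Until that double-integral computation is written out, your first assertion is a reduction, not a proof; note also that your Borell--TIS interpolation silently requires $\mathbf{E}\sup_{n\le T\le n+1}|A_T|=o(n)$, i.e.\ some regularity of $T\mapsto g(\cdot,T)$, which deserves a line.

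The second assertion has a sharper gap. You invoke the limit $\frac1T\int_0^T(Q_t^U)^2\,d\langle M\rangle_t\to\frac{1}{2\beta}$ \emph{almost surely} ``by \cite{CK18}''. But \cite{CK18}, as used throughout this paper (see \eqref{eq: convergence proba Qu} and the proof of Theorem \ref{th: convergence normal smaller}), gives this convergence only \emph{in probability}. Convergence in probability to a constant does not give almost sure boundedness of the random trajectory $T\mapsto\frac1T\int_0^T(Q_t^U)^2\,d\langle M\rangle_t$, so your Cauchy--Schwarz argument --- deterministic factor tending to $0$ times a factor controlled only in probability --- yields the cross term $\to 0$ in probability, which is weaker than the a.s.\ statement of the lemma. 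This is precisely where the paper's pathwise route pays off: its a.s.\ bound on $U_t$ (hence on $Q_t^U$) makes the cross term a.s.\ negligible at once, a tool your moment-based approach gives up. To repair your version you need genuine a.s.\ control of the quadratic functional, e.g.\ exponential tail bounds via its Laplace transform (as in \cite{Maru16}) combined with Borel--Cantelli along integers and the monotonicity of $T\mapsto\int_0^T(Q_t^U)^2\,d\langle M\rangle_t$.
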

\begin{proof}
$$
\int_0^T Q_t^Ud\langle M\rangle_t=\int_0^T \frac{d}{d\langle M\rangle_t}\int_0^t g(s,t)U_sds d\langle M\rangle_t=\int_0^T g(t,T) U_tdt
$$
because $U_t=\int_0^t e^{-\beta(t-s)}d\xi_s$, this integral can be regarded as the limit of Riemman sum from the theorem of Young integral \cite{Young36}, let us divide the interval [0,t] in to $2^n$ part and denote $0=t_0<t_1<\cdots<t_{2^n}=t$, then 
$$
U_t=\lim_{n\rightarrow \infty} \sum_{i=1}^{2^n} e^{-\beta(t-s_i)}(\xi_{t_i}-\xi_{t_{i-1}}),\, s_i\in (t_{i-1},t_i),
$$
With the H\"older continuity of the fractional Brownian motion and standard Brownian motion, there exists a constant $M$ such that 
$$
\big|\xi_{t_i}-\xi_{t_{i-1}}\big|\leq M|t_i-t_{i-1}|^{1/2-\varepsilon},\, i=1, 2,\cdots,\, 2^n
$$
for some $\varepsilon>0$. Then we have the inequality
$$
|U_t|\leq  M \int_0^t e^{-\beta(t-s)}ds^{1/2-\varepsilon}
$$
for some $\varepsilon>0$. From Lemma \ref{quadratic smaller} we can easily obtain 
$$
\frac{1}{\sqrt{T\langle M\rangle_T}}\int_0^T g(t,T) U_tdt  \xrightarrow {a.s.}0
$$
That is the first result. The second result is immediate when $V(t)\sim O(1/t)$ for $t$ large enough.
\end{proof}

\textbf{Acknowledge}: This work is supported by the National Nature Science Foundation of China (No. 71871202)

\end{document}